%% file: ex_article.tex
\documentclass[onefignum,onetabnum]{siamart171218}

\input{ex_shared}

\ifpdf
\hypersetup{
  pdftitle={Efficient subsampling-rate selection for practical multilevel Markov chain Monte Carlo},
  pdfauthor={Lise Guilliams, Pieter Vanmechelen, and Giovanni Samaey}
}
\fi

\begin{document}

\maketitle

\begin{abstract}
 The Multilevel Markov chain Monte Carlo algorithm accelerates Bayesian inversion
 by exploiting correlations between discretized quantities of interest across a
 hierarchy of model resolutions. A key practical parameter is the subsampling rate
 used to generate coarse-level proposals for the finer-level chains. Existing
 practice sets this rate equal to an integrated autocorrelation time as a proxy
 for proposal decorrelation, but this choice can be overly conservative and lead
 to substantial computational overhead. In this work we treat finite subsampling as an
 error-budget question. We derive a cost--error
 criterion for selecting a computationally efficient subsampling rate using
 estimates of sample variances, integrated autocorrelation times, and per-level
 costs. The criterion balances multilevel sampling errors across levels rather
 than enforcing approximate independence of the coarse proposals.
 We clarify the theoretical role of independence through an ideal reset-reference
 kernel, for which independent coarse proposals may be replaced by proposals
 generated from a reversible coarse-level kernel. We derive a Poisson-equation representation of the perturbation bias resulting from finite subsampling. Under a state-averaged lower-level forgetting assumption, this yields a conditional geometric perturbation-bias bound; in computation, the corresponding projected diagnostic is used as an empirical error-budget check.
 Numerical experiments on a highly autocorrelated elasticity beam benchmark show
 substantial reductions in computational cost compared with IAT-based
 subsampling. A second Darcy-flow inverse problem gives supporting evidence of
 qualitatively similar finite-subsampling behaviour in another elliptic
 benchmark.
\end{abstract}

\begin{keywords}
  Markov chain Monte Carlo, Multilevel Markov chain Monte Carlo, multilevel methods, Bayesian inversion problems
\end{keywords}

\begin{AMS}
  35R60 (PDEs with randomness), 62F15 (Bayesian inference), 62M05 (Markov processes: estimation), 65C05 (MC methods), 65C40 (Numerical methods for

\noindent Markov chains), 65N30 (FE methods for PDEs)
\end{AMS}

\section{Introduction}
\label{ch:introduction}
With the increasing availability of data, a common challenge across various
engineering fields is the estimation of unknown parameters within a given model
such that it accurately represents the observed data. Let \(\mathcal{F}\) denote
the forward model, which maps the parameter \(\theta\) to the observed data
\(u\) as
\begin{equation}
    u = \mathcal{F}(\theta)+\gamma
\end{equation}
where \(\gamma\) denotes the observational noise, which is assumed to be
independent and identically distributed Gaussian noise with zero mean and
diagonal covariance~\cite{baecher_reliability_2003}. In the setting of this
paper the main forward model is an elliptic partial differential equation (PDE)
approximated on a finite element grid, and the unknown coefficient field is
represented by a finite truncation of an underlying random field.

For the notation used below, Bayesian inference is based on the posterior
distribution
\[
    P(\theta|u) \propto \mathcal{L}(u|\theta)\pi(\theta),
\]
where \(\pi(\theta)\) denotes the prior and \(\mathcal{L}(u\mid\theta)\) the
likelihood~\cite{cotter_probabilistic_2015, smith_uncertainty_2024}. For
Gaussian observational noise this likelihood is given by
\cite{dodwell_hierarchical_2015}
\begin{equation}
    \mathcal{L}(u|\theta) =
    \exp \left(\frac{-||u-\mathcal{F}(\theta)||^2}{2\sigma^2_{F}} \right),
    \label{ingevulde likelihood}
\end{equation}
where \(\sigma_F\) is the fidelity parameter. Evaluating the likelihood
therefore requires solving the forward problem for a given \(\theta\), which is
typically the dominant computational cost in MCMC methods~\cite{law_data_2015,
tarantola_inverse_2005, hastings1970, metropolis1953}.

The Multilevel Monte Carlo (MLMC) method~\cite{giles_multilevel_2008} addresses
this trade-off between accuracy and computational cost by introducing a
hierarchy of discretization levels \(\ell=0,\ldots,L\). The finest level
provides the most accurate approximation but at the highest computational cost,
whereas the coarser levels are less expensive but also less accurate. This leads
to the following telescopic decomposition for the MLMC estimator:
\begin{equation}
    \begin{aligned}
        \widehat{Q}_{L}
        &= \frac{1}{N_0} \sum_{n=b_0+1}^{b_0+N_0} Q_0(\theta_0^n)
        + \sum_{\ell=1}^{L} \frac{1}{N_\ell} \sum_{n=b_\ell+1}^{b_\ell+N_\ell} 
        \big(Q_\ell(\theta_\ell^n) - Q_{\ell-1}(\theta_{\ell-1}^n)\big).
    \end{aligned}
    \label{MLMC}
\end{equation}
Here \(b_\ell\) denotes the number of burn-in samples discarded at level
\(\ell\). By aggregating estimators across all levels, MLMC methods can attain
the accuracy of the finest discretization at a reduced overall computational
cost.

The principles of the MLMC framework have been combined with the
Metropolis--Hastings algorithm to form the Multilevel Markov chain Monte Carlo
(MLMCMC) algorithm~\cite{dodwell_hierarchical_2015}. The MLMCMC method employs a
hierarchical estimator for the QoI based on the telescoping sum in
Equation~\eqref{MLMC}. Samples at each level are generated using an MH-type
accept--reject procedure, where proposals on finer levels, \(\ell>0\), are
obtained by subsampling the chain at level \(\ell-1\). The frequency with which
coarse-level samples are selected as proposals is the subsampling rate
\(\tau_\ell\). Existing recommendations suggest selecting this rate equal to the
integrated autocorrelation time (IAT) of the coarse-level Markov chain, using
the IAT as a proxy for decorrelation of the proposed samples~\cite{dodwell_hierarchical_2015}.
A central point of this work is that this IAT-based decorrelation rule can be
overly conservative for the cost--error objective of MLMCMC.
This leads to the main algorithmic contribution of this paper: a criterion for
selecting the subsampling rate in practical MLMCMC algorithms. Instead of
choosing the subsampling rate \(\tau_\ell\) only to approximate independent
coarse-level proposals, the proposed criterion balances the statistical benefit
of more strongly decorrelated proposals against the additional lower-level cost
incurred in generating them.

The independence assumption is not essential in an ideal reset-reference
construction: a reversible lower-level kernel is sufficient for the standard
fine-level Metropolis--Hastings acceptance probability
\cite{metropolis1953,hastings1970,tierney_1994}. This argument applies only to
the reset-reference transition and does not establish exactness of the practical
continued-chain implementation used here, where the lower-level chain may
continue after fine-level rejection. We therefore treat finite subsampling in
the practical chain as a perturbation and assess its magnitude empirically,
using a signed Poisson diagnostic that depends on implementation choices such as
the binning of the augmented state. We therefore use this diagnostic as a
practical check on the error budget in the chosen finite-dimensional projection,
rather than as a general certificate that finite-subsampling bias is absent.
This distinguishes the present approach from exact delayed-acceptance
constructions such as MLDA, which modify the implemented transition itself to
preserve invariance~\cite{christen_markov_2005,
lykkegaard_multilevel_2023}; related exact multilevel chains can also be
obtained under independent-MH proposal
structures~\cite{madrigalcianci2021analysisclassmultilevelmarkov}.

Building on the MLMCMC estimator, multilevel acceptance construction, and
IAT-based subsampling choice of~\cite{dodwell_hierarchical_2015}, as well as
the likelihood scaling and plane-stress benchmark of~\cite{vanmechelen_multilevel_2025},
this paper makes four contributions:
\begin{itemize}
    \item we derive a cost-based criterion for choosing the subsampling rate in
    practical MLMCMC;
    \item we propose an adaptive implementation that estimates variances, IATs,
    and level costs to compute the subsampling rate during the simulation;
    \item we clarify the role of reversibility through a finite-\(\tau\)
    reset-reference calculation and state the caveat for the practical
    continued-chain implementation;
    \item we use Poisson-equation perturbation diagnostics to assess the effect
    of reduced subsampling rates empirically.
\end{itemize}
The main numerical benchmark is an elasticity beam inverse problem. A
Darcy-flow inverse problem provides a supporting elliptic benchmark.

This paper is structured as follows. Section~\ref{ch:model setup} introduces the
common inverse-problem, random-field, KL, and multilevel notation. 
Section~\ref{ch:MLMCMC chapt} summarizes MLMCMC and the standard subsampling
choice. Section~\ref{ch:deriving efficient sub} studies finite-subsampling
effects through the reset-reference calculation, the cost and accuracy
diagnostics, and the signed Poisson diagnostic.
Section~\ref{sec:subsamp_derivation} derives and implements the proposed
subsampling strategy. Section~\ref{sec:subsamp_numerics} reports the numerical
experiments, and Section~\ref{ch:conclusion} contains the conclusion.


\section{Model setup}
\label{ch:model setup}
This section elaborates on the model ingredients that are common to both numerical examples. Both examples are Bayesian inverse problems for parameterized elliptic PDEs. The forward map \(\mathcal{F}\) introduced in Section~\ref{ch:introduction} is obtained by solving the PDE for a spatial coefficient determined by the parameter \(\theta\), and the posterior is defined through the Gaussian observation model in Equation~\eqref{ingevulde likelihood}. In practice, instead of observing the full PDE solution, we assume access only to a discrete set of observations from which the spatial profile of \(\theta\) is inferred. The quantity of interest is a scalar functional of the PDE solution or of the corresponding level approximation, denoted by \(Q_\ell(\theta_\ell)\).

The uncertain coefficient field is represented in both examples by applying a pointwise transformation to an underlying Gaussian random field. Modeling a Gaussian random field is inherently an infinite-dimensional problem, since a Gaussian-distributed random variable is assigned to every point in the domain and these values are correlated through the covariance structure~\cite{chung2020introduction}. For computational purposes, we approximate this Gaussian field using a truncated Karhunen--Lo\`eve (KL) expansion with \(M\) modes, which captures the dominant covariance structure while yielding a finite-dimensional representation.

Let $\{\lambda_m\}_{m\in\mathbb{N}}$ be the eigenvalues of the covariance function $C$ and $\{b_m\}_{m\in\mathbb{N}}$ be its normalized eigenfunctions which are obtained by solving the eigenvalue problem

\begin{equation}
\int_D C(x,y)b_m(y)dy = \lambda_m b_m(x).
\end{equation}

The covariance kernel \(C\) is problem dependent. In the main elasticity
benchmark, we use a Mat\'ern covariance function, given by
\cite{dodwell_hierarchical_2015}

\begin{equation}
    C(x,y) = \dfrac{\sigma^2}{2^{\nu-1}\Gamma(\nu)}\left(\sqrt{2\nu}\dfrac{\|x-y\|_p}{\lambda}\right)^{\nu}K_{\nu}\left(\sqrt{2\nu}\dfrac{\|x-y\|_p}{\lambda}\right)
    \label{eq:matern}
    \end{equation}
    where $\Gamma(\cdot)$ is the Gamma function, $K_\nu$ the modified Bessel function of the second kind, $\sigma^2$ the variance of the field and $\lambda$ the length scale parameter. For implementation, we set the smoothness parameter to $\nu = 1.5$ and the length scale to $\lambda = 0.5$.
The supporting Darcy-flow example uses a different covariance kernel, specified
with the Darcy model in Subsection~\ref{sec:darcy_problem}.

The Karhunen--Loève expansion provides a straightforward expansion of the random, gaussian field \(G\) in terms of the eigenpairs of the covariance operator.  
In general, this expansion can be written as Equation~\eqref{eq:kl_expansion}.


\begin{equation}
G(x,\omega) = \mathbb{E}[G(\cdot,\omega)] + \sum_{m=1}^{\infty} \sqrt{\lambda_m}\xi_m(\omega)b_m(x)
\label{eq:kl_expansion}
\end{equation}

The multilevel hierarchy is defined by a sequence of discretization levels \(\ell=0,\ldots,L\). A level is specified by the spatial resolution \(R_\ell\), the KL truncation dimension \(M_\ell\), or both. Increasing \(\ell\) therefore corresponds to solving the same inverse problem with a more accurate spatial discretization and/or a richer representation of the uncertain field.

In the numerical experiments presented in this paper the number of observation points is also increased across the levels. This influences the computed likelihood and leads to the following weighted version of the likelihood~\cite{vanmechelen_multilevel_2025}
\begin{equation}
\mathcal{L}_{\ell}(u_{\ell}^{\mathrm{obs}} \mid \theta_{\ell})
\propto 
\exp\!\left[
\dfrac{-\| W_{\ell}(u^{\mathrm{obs}}) - F_{\ell}(\theta_{\ell}) \|^{2}}
{2\, N_{\ell}^{\mathrm{obs}}\, \sigma_{F}^{2}}
\right]
\label{eq:levellike}
\end{equation}
where \(N_{\ell}^{\mathrm{obs}}\) is the number of observation points used at level \(\ell\) and \(W_{\ell} : \mathbb{R}^{N_{L}^{\mathrm{obs}}} \to \mathbb{R}^{N_{\ell}^{\mathrm{obs}}}\) is a weighting operator that maps the full set of observations to the nodes of the mesh at level \(\ell\).

The PDE-specific numerical examples are introduced in Subsection~\ref{effect accuracy}, immediately before the correction-estimate and signed-Poisson diagnostics.

\section{The Multilevel Markov chain Monte Carlo algorithm}
This section introduces the multilevel Markov chain Monte Carlo (MLMCMC) algorithm proposed by \cite{dodwell_hierarchical_2015}. Subsection~\ref{ch:MLMCMC} presents the MLMCMC methodology, while Subsection~\ref{ch:subsampling rate current research} reviews the subsampling strategy originally proposed in \cite{dodwell_hierarchical_2015}.

\label{ch:MLMCMC chapt}

\subsection{The MLMCMC algorithm}
\label{ch:MLMCMC}
The MLMCMC algorithm builds on the MLMC methodology and is designed to reduce its computational cost while maintaining the accuracy of the QoI approximation. An overview of the MLMCMC algorithm is provided in Algorithm~\ref{alg:mlmcmc}. This approach constructs a hierarchy of discretization levels, denoted by $\ell = 0, \dots, L$. These levels are generated either by refining the spatial discretization of the computational grid, represented by $R_\ell$, or by increasing the truncation accuracy of the KL expansion, represented by $M_\ell$.


The first term of the telescopic sum in Equation~\eqref{MLMC} employs coarse discretizations of the forward model to obtain a computationally inexpensive but relatively rough estimate of the quantity of interest. The samples on this coarse level of the MLMCMC algorithm are generated using the Metropolis–Hastings algorithm with the preconditioned Crank–Nicolson (pCN) proposal density \cite{dodwell_hierarchical_2015}. Assuming a Gaussian prior on the unknown parameter \( \theta \), the pCN proposal takes the form
\begin{equation}
    \theta' =  \sqrt{1-\beta^2}\,\theta^n + \beta\,\psi, \quad 
    \psi \sim \mathcal{N}(0, \mathcal{C})
    \label{pcn eq}
\end{equation}
where \( \beta \in (0,1) \) is a tunable parameter controlling the step size and \( \mathcal{C} \) denotes the prior covariance operator.

The subsequent terms in the telescopic sum, denoted by \( Y \), serve as correction terms that account for the discrepancy between successive discretization levels. This yields the following telescopic sum for the MLMCMC estimator
\begin{equation}
    \begin{aligned}
        \widehat{Q}_{L} &= \frac{1}{N_{0}} \sum_{n=b_0+1}^{b_0+N_{0}} Q_0(\theta_{0}^n)
        + \sum_{\ell=1}^{L} \frac{1}{N_{\ell}} \sum_{n=b_\ell+1}^{b_\ell+N_{\ell}} 
        \bigl(Q_{\ell}(\theta_{\ell}^n) - Q_{\ell-1}(\Theta_{\ell-1}^n)\bigr) \\
        &= \widehat{Q}_0 + \sum_{\ell=1}^{L} \widehat{Y}_{\ell, N_{\ell}}
    \end{aligned}
    \label{correction term}
\end{equation}

where $\theta_\ell$ and $\Theta_{\ell-1}$ denote two correlated Markov chains on level $\ell$ and $\ell-1$ respectively.

\begin{algorithm}
\caption{Multilevel Markov chain Monte Carlo}
\begin{algorithmic}
\State Define levels $\ell=0,\ldots,L$. Sample $\theta_{L}^0 \sim \pi(\theta_L)$. For each level $\ell$, define $\theta_{\ell}^0$ as the truncation of $\theta_L^0$ after the first $M_{\ell}$ modes. Define subsampling rates $\tau_1,\ldots,\tau_L$, where \(\tau_\ell\) couples level \(\ell-1\) to level \(\ell\).
\For{$n_0=0,\ldots,N_0-1$}
\State{Generate $\theta_0^{n_0+1}$ using one iteration of the Metropolis-Hastings algorithm with}
\State{pCN proposals}
\EndFor
\For{$\ell=1,\ldots,L$}
\For{$n_{\ell} = 0,\ldots,N_{\ell}-1$}
\State Generate coarse proposal by subsampling $(\theta_{\ell}^C)' = \theta_{\ell-1}^{(n_{\ell}+1)\tau_{\ell}}$
\State Generate fine proposal $(\theta_{\ell}^F)' \sim q^{\text{pCN}}(\cdot | \theta_{\ell}^{F,n})$
\State Concatenate coarse and fine proposals $\theta_{\ell}' = \left[(\theta_{\ell}^C)',(\theta_{\ell}^F)'\right]$
\State Evaluate multilevel acceptance rate
\[
\alpha_{\ell} = \min\left\lbrace 1, \dfrac{\mathcal{L}_{\ell}(u|\theta_{\ell}')\mathcal{L}_{\ell-1}(u|\theta_{\ell}^{C,n})}{\mathcal{L}_{\ell}(u|\theta_{\ell}^n)\mathcal{L}_{\ell-1}(u|(\theta_{\ell}^C)')} \right\rbrace
\]
\State Sample $r \sim \mathcal{U}([0,1])$
\If{$\alpha_{\ell} \geq r$}
\State Accept proposal and set $\theta_{\ell}^{n+1} = \theta_{\ell}'$
\Else
\State Reject proposal and set $\theta_{\ell}^{n+1} = \theta_{\ell}^n$
\EndIf
\EndFor
\EndFor
\State \Return Markov chains of samples $\{\theta_{\ell}^{n_{\ell}}\}_{n_{\ell}=0}^{N_{\ell}}$ at all levels $\ell=0,\ldots,L$
\end{algorithmic}
\label{alg:mlmcmc}
\end{algorithm}

In contrast to standard MLMC, which relies on independently generated samples at
each level, MLMCMC constructs a Markov chain of samples on every
level~\cite{dodwell_hierarchical_2015}. Consequently, samples within a single level exhibit autocorrelation. To generate
these Markov chains, the algorithm proceeds hierarchically. At each level $\ell\geq 1$, every $\tau_{\ell}^{\text{th}}$ sample, where
\(\tau_{\ell}\) denotes the rate at which the chain at level $\ell-1$ is
subsampled, is taken from the Markov chain at the preceding, coarser level
$\ell-1$ and used as a proposal state for the Markov chain at the finer level
$\ell$.



The auxiliary chain $\Theta$ is defined as a subsampled version of the original coarse-level chain $\theta$  
\begin{equation}
\Theta_{\ell,C}^n = \theta_{\ell-1}^{n \tau_{\ell}}.
\end{equation}

However, since the parameters at different levels may not share the same dimensionality, the truncation dimension \( M_{\ell-1} \) of the parameter \( \theta \) at level \( \ell-1 \) may be smaller than the truncation \( M_\ell \) at level \( \ell \). Consequently, \( M_\ell - M_{\ell-1} \) additional components, denoted by \( \theta_{\ell,F}^n \), must be appended to \( \Theta_{\ell,C}^n \) resulting in 

\begin{equation}
    \theta^n_\ell = [\Theta_{\ell,C}^n, \theta_{\ell,F}^n].
\end{equation}

The acceptance probability for a general proposed sample in the MLMCMC framework can be expressed as a variant of the classical MH acceptance rate

    \begin{equation}
        \alpha_{\ell} = \min\left\lbrace 1, \dfrac{P_{\ell}(\theta_{\ell}'|u)q_{\ell}^{\text{ML}}(\theta_{\ell}^n|\theta_{\ell}')}{P_{\ell}(\theta_{\ell}^n|u)q_{\ell}^{\text{ML}}(\theta_{\ell}'|\theta_{\ell}^n)}\right\rbrace
        \label{eq:ml_accrate_original}
    \end{equation}

where $P_{\ell}(\theta_{\ell}|u)$ represents the $\ell^\text{th}$ level posterior density of the sample $\theta_\ell$ and $q_{\ell}^{\text{ML}}$ is the proposal density for the fine modes.

As shown in~\cite{dodwell_hierarchical_2015}, when independent proposals are used in combination with the preconditioned pCN proposal scheme, the acceptance probability simplifies to a ratio of likelihoods
\begin{equation}
    \alpha_\ell = \min \left(
        1, 
        \frac{
        \mathcal{L}_\ell(u| \theta_\ell') \mathcal{L}_{\ell-1}(u | \theta_{\ell,C}^n)
        }{
        \mathcal{L}_\ell(u| \theta_\ell^n) \mathcal{L}_{\ell-1}(u | \theta_{\ell,C}')
        }
        \right)
    \label{eq:ml_acceptance}
    \end{equation}

and for the coarsest level an acceptance rate of
\begin{equation}
     \alpha_0 = \min \left(
    1, 
    \frac{
    \mathcal{L}_0(u | \Theta'_{0})
    }{
    \mathcal{L}_0(u| \Theta_0^n)
    }
    \right).
    \end{equation}


     The closed-form expression for the likelihood, given in Equation~\eqref{ingevulde likelihood}, combined with the usage of the pCN proposal density greatly simplifies the implementation of the acceptance probability in the Metropolis-Hastings step of the MLMCMC algorithm.



\subsection{Subsampling to approximate independence}
\label{ch:subsampling rate current research}
The theoretical convergence analysis of~\cite{dodwell_hierarchical_2015} assumes that the proposal distribution for the coarse modes at fine levels, $\ell > 0$, is an independent sampler of the previous level $\ell-1$. In practice, this independence assumption is approximated by setting the subsampling rate equal to the integrated autocorrelation time (IAT), denoted \(T\). The role of \(T\) is to account for the increase in Monte Carlo variance caused by autocorrelation. For an estimator of the QoI, denoted \(\widehat{Q}\), based on \(N\) samples with sample variance \(s^2\), the correlated-sample variance is written as
\begin{equation}
    \mathbb{V}[\widehat{Q}] = \frac{s^{2} T}{N}.
    \label{mc_error_correlated}
\end{equation}
The IAT is obtained from the autocorrelation function \(\rho(t)\) by~\cite{Bia_as_2023}
\begin{equation}
T = 1 + 2 \sum_{t=1}^{\infty} \rho(t).
\label{autocorrelation_equation}
\end{equation}
In computations, \(T\) is estimated from the sampled autocorrelation function using a finite truncation of this sum. Equation~\eqref{mc_error_correlated} is often rewritten in terms of the effective sample size \(N_{\mathrm{eff}}\), defined by
\begin{equation}
    N_{\mathrm{eff}} = \frac{N}{T}.
\end{equation}
Thus a correlated chain of length \(N\) gives the same Monte Carlo error as an i.i.d.\ estimator based on only \(N_{\mathrm{eff}}\) independent samples~\cite{seiffert2024estimatingeffectivesamplesize}. The IAT-based subsampling rule uses this quantity as a proxy for decorrelation of the coarse proposals. This does not imply that the subsampled Markov chain produces truly independent samples, only that it provides an equivalent level of Monte Carlo accuracy.

\section{Finite-subsampling effects}
\label{ch:deriving efficient sub}
This section develops the rationale for choosing a subsampling rate smaller than
the IAT-based default. Subsection~\ref{justification} first provides a
theoretical explanation for why independence of the proposed samples is not the
only relevant criterion in an ideal reset-kernel setting, and identifies the
caveat that arises for the practical accepted chain.
Subsection~\ref{ch: effect subsampling} then examines the influence of the
subsampling rate on computational cost, while Subsection~\ref{effect accuracy}
treats the finite-subsampling bias as an error-budget diagnostic for the QoI
estimates.

\subsection{Reset-kernel reference calculation for finite subsampling}
\label{justification}
In the original MLMCMC construction of~\cite{dodwell_hierarchical_2015}, subsampling is used to make the coarse-level proposals approximately independent draws from the intermediate posterior \(P_{\ell-1}\). This independence assumption is what allows Equation~\eqref{eq:ml_acceptance} to be derived from the general Metropolis--Hastings ratio in Equation~\eqref{eq:ml_accrate_original}; see \cite[Lemma 3.1]{dodwell_hierarchical_2015}. For the present purpose, however, full independence is stronger than what is needed in an idealized setting. It is enough that the coarse proposal is generated by a reversible transition kernel with invariant distribution \(P_{\ell-1}\). We first record the elementary detailed-balance fact used below.

\begin{lemma}
Let $K^1$ be the transition probability of a Markov chain that satisfies detailed balance with respect to a stationary distribution $P$:
\begin{equation}
K^1(X|Y)P(Y) = K^1(Y|X)P(X).
\label{detailedbalance1}
\end{equation}
Let $K^{\tau}$ denote the $\tau$-step transition probability for any $\tau \in \mathbb{N}$. It then holds that the chain satisfies $\tau$-step detailed balance:
\begin{equation}
K^{\tau}(X|Y)P(Y) = K^{\tau}(Y|X)P(X).
\label{eq:gen_detailed_balance}
\end{equation}
\label{thm:gen_detailed_balance}
\end{lemma}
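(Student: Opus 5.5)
Induction on $\tau$, writing the $\tau$-step kernel through the Chapman--Kolmogorov relation
\[
K^{\tau+1}(X\mid Y)=\int K^{1}(X\mid Z)\,K^{\tau}(Z\mid Y)\,dZ .
\]
For a finite or discrete state space the integral is a sum. For general kernels one would phrase detailed balance as equality of the measures $K(dX\mid Y)P(dY)$ and $K(dY\mid X)P(dX)$ on the product space, and the same argument goes through with integrals against measures.

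The base case $\tau=1$ is exactly the hypothesis \eqref{detailedbalance1}. (The case $\tau=0$, with the identity kernel, is trivial if one wants it.)

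For the inductive step, assume \eqref{eq:gen_detailed_balance} holds for $\tau$. Multiply the Chapman--Kolmogorov identity by $P(Y)$ and apply the induction hypothesis inside the integral:
\[
K^{\tau}(Z\mid Y)P(Y)=K^{\tau}(Y\mid Z)P(Z).
\]
Next apply the one-step detailed balance $K^1(X\mid Z)P(Z)=K^1(Z\mid X)P(X)$. This gives
\[
K^{\tau+1}(X\mid Y)P(Y)=\int K^{\tau}(Y\mid Z)\,K^{1}(Z\mid X)\,P(X)\,dZ .
\]
Pull $P(X)$ out of the integral. What remains is $\int K^{\tau}(Y\mid Z)K^1(Z\mid X)\,dZ$, which is the $(\tau+1)$-step kernel $K^{\tau+1}(Y\mid X)$ by Chapman--Kolmogorov in the other ordering ($K^{\tau+1}=K^{\tau}K^{1}=K^{1}K^{\tau}$). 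This completes the induction.

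The only real obstacle is keeping the ordering of kernels straight in the last step. The composed kernel comes out as "one step first, then $\tau$ steps", so it must be identified with the $(\tau+1)$-step kernel via the semigroup property in that ordering. In the general measure-theoretic setting one must also justify the interchange of integrals (Fubini/Tonelli, using nonnegativity), since detailed balance holds only as an identity of measures.

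An equivalent operator viewpoint is available as a cross-check. Detailed balance means $K^1$ is self-adjoint on $L^2(P)$, and powers of a self-adjoint operator are self-adjoint, so $K^\tau$ is self-adjoint, which is precisely \eqref{eq:gen_detailed_balance}.
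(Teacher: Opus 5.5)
Your proof is correct and takes essentially the same route as the paper: induction via Chapman--Kolmogorov, applying the induction hypothesis to the multi-step factor and one-step detailed balance to the single-step factor, then recombining via Chapman--Kolmogorov in the reverse ordering. The measure-theoretic remark and the self-adjointness cross-check on $L^2(P)$ are sound additions that the paper does not include.
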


\begin{proof}
We prove the result by induction. For \(\tau=1\), it is exactly the detailed-balance condition in Equation~\eqref{detailedbalance1}. 

Let $\tau > 1$ and suppose that Equation~\eqref{eq:gen_detailed_balance} holds for $\tau-1$. Then, using the Chapman--Kolmogorov equations and the induction hypothesis,

\begin{subequations}
\begin{align}
K^{\tau}(X|Y)P(Y) &= P(Y)\int K^{\tau-1}(Z|Y)K^1(X|Z) dZ
\\
&= \int P(Z)K^{\tau-1}(Y|Z)K^1(X|Z) dZ.
\end{align}
\end{subequations}

Applying detailed balance in Equation~\eqref{detailedbalance1} to the factor \(P(Z)K^1(X|Z)\) gives

\begin{subequations}
\begin{align}
\int P(Z)K^{\tau-1}(Y|Z)K^1(X|Z) dZ &= \int P(X)K^{\tau-1}(Y|Z)K^1(Z|X) dZ
\\
&= P(X)K^{\tau}(Y|X).
\end{align}
\end{subequations}
\end{proof}


We next apply Lemma~\ref{thm:gen_detailed_balance} to an ideal reset version of the multilevel proposal. This reset kernel is an exact fine-level reference transition, not the continued-chain implementation used in the simulations. In this reference construction, the coarse component proposed at level \(\ell\) is obtained by running a reversible level-\((\ell-1)\) kernel for exactly \(\tau\) steps, starting from the coarse component of the currently accepted level-\(\ell\) state. This is not a new Metropolis--Hastings principle; it is the standard detailed-balance calculation applied to this proposal kernel~\cite{metropolis1953,hastings1970,tierney_1994}. The point is that, in the reset setting, the usual MLMCMC acceptance probability can be justified without requiring the coarse proposal to be independent.

\begin{theorem}[Exact reset reference kernel]
\label{thm:subsampled_accrate}
Let \(\ell \geq 1\), and let \(K_{\ell-1}\) be a Markov transition kernel that is reversible with respect to the coarse posterior \(P_{\ell-1}\). Suppose that, conditional on the current accepted level-\(\ell\) state
\[
\theta_\ell^n=(\theta_{\ell,C}^n,\theta_{\ell,F}^n),
\]
the reset multilevel proposal is generated by
\[
\theta_{\ell,C}' \sim K_{\ell-1}^{\tau}(\theta_{\ell,C}^n,\cdot),
\qquad
\theta_{\ell,F}' \sim q_{\ell,F}^{\mathrm{ML}}(\cdot\mid \theta_{\ell,F}^n),
\]
with the coarse and fine components proposed conditionally independently. If the proposal is accepted with the Metropolis--Hastings probability

\begin{equation}
\alpha_{\ell}
=
\min\left\lbrace
1,
\frac{
P_{\ell}(\theta_{\ell}')
P_{\ell-1}(\theta_{\ell,C}^{n})
q^{\mathrm{ML}}_{\ell,F}(\theta_{\ell,F}^{n}\mid \theta_{\ell,F}')
}{
P_{\ell}(\theta_{\ell}^{n})
P_{\ell-1}(\theta_{\ell,C}')
q^{\mathrm{ML}}_{\ell,F}(\theta_{\ell,F}'\mid \theta_{\ell,F}^{n})
}
\right\rbrace .
\label{eq:ideal_subsampled_acceptance}
\end{equation}
then the resulting reset kernel satisfies detailed balance with respect to \(P_{\ell}\), and therefore \(P_{\ell}\) is invariant. If the fine proposal is reversible with respect to the fine prior, as for pCN proposals~\cite{cotter_mcmc_2013}, then the corresponding prior proposal factors cancel in the usual way and Equation~\eqref{eq:ideal_subsampled_acceptance} reduces to the practical likelihood-ratio form used in MLMCMC.
\end{theorem}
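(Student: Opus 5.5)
The argument is the standard Metropolis--Hastings detailed-balance calculation for the reset kernel, with Lemma~\ref{thm:gen_detailed_balance} used to cancel the $\tau$-step coarse proposal density. Write the reset proposal density as
\[
q_\ell(\theta_\ell'\mid\theta_\ell^n)=K_{\ell-1}^{\tau}(\theta_{\ell,C}'\mid\theta_{\ell,C}^n)\,q^{\mathrm{ML}}_{\ell,F}(\theta_{\ell,F}'\mid\theta_{\ell,F}^n),
\]
which factorizes because the coarse and fine components are proposed conditionally independently. The reset kernel is then
\[
\mathcal{K}_\ell(\theta_\ell^n,d\theta_\ell')=\alpha_\ell(\theta_\ell^n,\theta_\ell')q_\ell(\theta_\ell'\mid\theta_\ell^n)\,d\theta_\ell' + r(\theta_\ell^n)\delta_{\theta_\ell^n}(d\theta_\ell').
\]
The rejection part is symmetric in the detailed-balance identity. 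So it suffices to check, for $x\neq y$,
\[
P_\ell(x)\,q_\ell(y\mid x)\,\alpha_\ell(x,y)=P_\ell(y)\,q_\ell(x\mid y)\,\alpha_\ell(y,x).
\]

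The key step is to apply Lemma~\ref{thm:gen_detailed_balance} with $P=P_{\ell-1}$ and $K^1=K_{\ell-1}$. This gives $K_{\ell-1}^\tau(y_C\mid x_C)=K_{\ell-1}^\tau(x_C\mid y_C)P_{\ell-1}(y_C)/P_{\ell-1}(x_C)$. Substituting this into the generic Metropolis--Hastings ratio $P_\ell(y)q_\ell(x\mid y)/(P_\ell(x)q_\ell(y\mid x))$, the $\tau$-step kernel factors cancel. What remains is
\[
R(x,y)=\frac{P_\ell(y)\,P_{\ell-1}(x_C)\,q^{\mathrm{ML}}_{\ell,F}(x_F\mid y_F)}{P_\ell(x)\,P_{\ell-1}(y_C)\,q^{\mathrm{ML}}_{\ell,F}(y_F\mid x_F)},
\]
which is exactly the ratio in \eqref{eq:ideal_subsampled_acceptance}. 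Since $R(y,x)=1/R(x,y)$, the identity $P_\ell(x)q_\ell(y\mid x)\min\{1,R(x,y)\}=P_\ell(y)q_\ell(x\mid y)\min\{1,R(y,x)\}$ holds. This follows from the usual two-case check ($R\le1$ or $R>1$), or from $a\min\{1,b/a\}=\min\{a,b\}$. Integrating detailed balance over $x$ then gives invariance of $P_\ell$.

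For the reduction to the likelihood-ratio form, write $P_\ell(\theta_\ell)\propto\mathcal{L}_\ell(u\mid\theta_\ell)\pi_C(\theta_C)\pi_F(\theta_F)$, assuming the prior factorizes across KL modes (independent Gaussian coefficients $\xi_m$). Write likewise $P_{\ell-1}(\theta_C)\propto\mathcal{L}_{\ell-1}(u\mid\theta_C)\pi_C(\theta_C)$. Reversibility of pCN with respect to $\pi_F$ gives $\pi_F(y_F)q^{\mathrm{ML}}_{\ell,F}(x_F\mid y_F)=\pi_F(x_F)q^{\mathrm{ML}}_{\ell,F}(y_F\mid x_F)$. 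With this, the $\pi_C$ and $\pi_F$ factors and the fine proposal densities all cancel. The ratio reduces to the likelihood ratio of \eqref{eq:ml_acceptance}, and normalizing constants cancel throughout.

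The main obstacle is measure-theoretic rather than algebraic. $K_{\ell-1}$ is itself a Metropolis--Hastings kernel with an atom at the current point, so $K^\tau_{\ell-1}$ need not have a Lebesgue density, and likewise the infinite-dimensional pCN kernel. To handle this, I would carry out the argument at the level of measures. I would state detailed balance as symmetry of $P_\ell(dx)\mathcal{K}_\ell(x,dy)$, and use the Tierney framework~\cite{tierney_1994}: the ratio is the Radon--Nikodym derivative of the measure $P_\ell(dx)q_\ell(x,dy)$ with respect to its transpose. Lemma~\ref{thm:gen_detailed_balance}, read as the symmetry of $P_{\ell-1}(dx_C)K^\tau_{\ell-1}(x_C,dy_C)$, identifies the coarse contribution to that derivative as $P_{\ell-1}(x_C)/P_{\ell-1}(y_C)$.

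The diagonal event $y_C=x_C$ (coarse chain rejecting all $\tau$ steps) needs a remark. There, $\alpha_\ell$ is well defined and the symmetric structure is unaffected, because the joint measure restricted to the diagonal is trivially symmetric.
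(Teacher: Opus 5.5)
Your proposal is correct and follows the paper's own proof: factorize the reset proposal, use Lemma~\ref{thm:gen_detailed_balance} to replace the $\tau$-step coarse kernel by $P_{\ell-1}(x_C)/P_{\ell-1}(y_C)$, apply the standard Metropolis--Hastings detailed-balance identity (rejections only add mass on the diagonal), and cancel the prior and pCN factors; your Tierney-style measure-level remarks add rigor that the paper leaves implicit. Two small slips in those remarks: the acceptance ratio is the Radon--Nikodym derivative of the transpose of $P_\ell(dx)q_\ell(x,dy)$ with respect to that measure, not the reverse, and on $\{y_C=x_C\}$ only the coarse factor $P_{\ell-1}(dx_C)K_{\ell-1}^{\tau}(x_C,dy_C)$ is trivially symmetric, since the fine components still move and are handled by $\alpha_\ell$---though factorizing through that symmetric coarse measure already absorbs its atom, so no separate diagonal argument is needed.
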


\begin{proof}
The proposal density factorizes as
\[
q_{\ell}^{\mathrm{ML}}(\theta_\ell'\mid \theta_\ell^n)
=
K_{\ell-1}^{\tau}(\theta_{\ell,C}^n,\theta_{\ell,C}')
q_{\ell,F}^{\mathrm{ML}}(\theta_{\ell,F}'\mid \theta_{\ell,F}^n),
\]
and similarly for the reverse proposal. By Lemma~\ref{thm:gen_detailed_balance},
\[
K_{\ell-1}^{\tau}(\theta_{\ell,C}^n,\theta_{\ell,C}')
P_{\ell-1}(\theta_{\ell,C}^n)
=
K_{\ell-1}^{\tau}(\theta_{\ell,C}',\theta_{\ell,C}^n)
P_{\ell-1}(\theta_{\ell,C}').
\]
Substituting this identity into the general Metropolis--Hastings ratio~\eqref{eq:ml_accrate_original} gives~\eqref{eq:ideal_subsampled_acceptance}. The transition is therefore an ordinary Metropolis--Hastings kernel with proposal density \(q_{\ell}^{\mathrm{ML}}\), so the standard MH detailed-balance identity gives
\[
P_{\ell}(\theta_\ell^n)
q_{\ell}^{\mathrm{ML}}(\theta_\ell'\mid \theta_\ell^n)
\alpha_{\ell}(\theta_\ell^n,\theta_\ell')
=
P_{\ell}(\theta_\ell')
q_{\ell}^{\mathrm{ML}}(\theta_\ell^n\mid \theta_\ell')
\alpha_{\ell}(\theta_\ell',\theta_\ell^n).
\]
This identity proves detailed balance for the accepted moves. The rejected moves only add probability mass to the diagonal of the transition kernel, since the chain then remains at its current state. This diagonal part is balanced automatically. Therefore the full reset kernel satisfies detailed balance and leaves \(P_\ell\) invariant. If the fine-level proposal is pCN, its reversibility with respect to the Gaussian prior gives the usual cancellation of the prior and proposal-density terms, so the acceptance probability reduces to the likelihood-ratio form used above.
\end{proof}

\begin{proposition}[Scope of correctness for the practical chain]
\label{prop:scope_practical_chain}
Theorem~\ref{thm:subsampled_accrate} proves exact invariance for the reset reference kernel, not for every recursive implementation of Algorithm~\ref{alg:mlmcmc}. In the practical continued-chain implementation, a rejected fine-level proposal leaves the accepted coarse component unchanged, while the lower-level chain may continue to move. The next coarse proposal is then generated from this continued lower-level state, rather than necessarily from the coarse component of the current accepted fine state. Thus the proposal law of the practical augmented chain need not coincide with the reset proposal law in Theorem~\ref{thm:subsampled_accrate}. Unless this augmented transition is itself shown to satisfy detailed balance with an invariant law whose fine marginal is \(P_{\ell}\), the theorem should not be interpreted as an exact invariance proof for the practical finite-\(\tau\) accepted chain.
\end{proposition}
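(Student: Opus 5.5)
The proposition is a scope statement, so the proof has to establish two things. First, the hypotheses of Theorem~\ref{thm:subsampled_accrate} are not met by the practical continued-chain transition. Second, the conclusion of that theorem does not transfer automatically, so invariance of \(P_\ell\) for the practical chain would need a separate argument. I would model the practical algorithm as a Markov chain on an augmented state
\[
Z^n=(\theta_\ell^n,\vartheta^n),
\]
where \(\vartheta^n\) is the current state of the running level-\((\ell-1)\) chain. I would then compare the conditional proposal law of the next coarse component with the reset law \(K_{\ell-1}^{\tau}(\theta_{\ell,C}^n,\cdot)\) assumed in the theorem.

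\emph{Step 1.} Write the practical proposal explicitly. In the practical chain the next coarse proposal is
\[
\theta_{\ell,C}'\sim K_{\ell-1}^{\tau}(\vartheta^n,\cdot),
\]
and \(\vartheta^n\) is updated to \(\theta_{\ell,C}'\) whether the proposal is accepted or rejected. After an acceptance we have \(\vartheta^n=\theta_{\ell,C}^{n}\), and the practical and reset proposal laws agree. After a rejection, \(\theta_{\ell,C}^{n+1}=\theta_{\ell,C}^n\) while \(\vartheta^{n+1}=\theta_{\ell,C}'\neq\theta_{\ell,C}^n\) in general. The proposal is therefore a function of \(\vartheta\), not of the accepted fine state, so the marginal process \(\theta_\ell^n\) is not Markov with kernel \(q_\ell^{\mathrm{ML}}\). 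The factorization used in the proof of Theorem~\ref{thm:subsampled_accrate}, together with Lemma~\ref{thm:gen_detailed_balance} applied at \(\theta_{\ell,C}^n\), then has no counterpart: the reverse-move density \(K_{\ell-1}^{\tau}(\theta_{\ell,C}',\theta_{\ell,C}^n)\) is not the probability of the reverse move in the practical chain.

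\emph{Step 2.} Show that "need not coincide" is a genuine possibility by giving an example where the laws actually differ. The simplest case is a two-point or Gaussian AR(1) coarse chain with \(\tau=1\) and a likelihood ratio that makes rejection occur with positive probability. In such an example, the conditional law of \(\theta_{\ell,C}'\) given \(\theta_\ell^n\) under the practical chain is a mixture over \(\vartheta^n\), and this mixture differs from \(K_{\ell-1}(\theta_{\ell,C}^n,\cdot)\). This suffices for the proposition's claim, which is only that the theorem does not apply, not that invariance fails. Optionally, I would show in the same toy model that the stationary fine marginal differs from \(P_\ell\) at first order in the rejection probability, which makes the caveat concrete.

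\emph{Step 3.} Conclude the logical point. Theorem~\ref{thm:subsampled_accrate} uses two facts: the proposal is a kernel in the accepted state, and that kernel satisfies \(\tau\)-step detailed balance. Invariance of \(P_\ell\) for the practical chain would therefore require its own argument, namely detailed balance, or at least stationarity, of the augmented kernel with respect to some law whose \(\theta_\ell\)-marginal is \(P_\ell\). The main obstacle I expect is keeping the example in Step 2 honest: the mixture over \(\vartheta^n\) must be shown to actually change the proposal law, not just the way it is written. I would first try the two-state chain, where all quantities are finite matrices and can be checked by direct computation.
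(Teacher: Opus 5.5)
Your Steps~1 and~3 are the paper's argument. The paper also augments the state with the live lower-level state and observes that after a fine-level rejection the next coarse proposal is drawn from \(K_{\ell-1}^{\tau}(\vartheta,\cdot)\) rather than \(K_{\ell-1}^{\tau}(\theta_{\ell,C},\cdot)\). It then concludes that the factorization and detailed-balance calculation of Theorem~\ref{thm:subsampled_accrate} cover only the reset kernel. The paper additionally puts the recursive bookkeeping into the augmented state. Your pair \((\theta_\ell^n,\vartheta^n)\) with \(\theta_{\ell,C}'\sim K_{\ell-1}^{\tau}(\vartheta^n,\cdot)\) is literally Markov only for \(\ell=1\), but this is harmless because a two-level witness suffices.

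What you add is Step~2, and it is worth keeping. The paper gives no example; it only asserts that the two laws differ whenever the two lower-level states differ. That tacitly requires \(K_{\ell-1}^{\tau}(\vartheta,\cdot)\neq K_{\ell-1}^{\tau}(\theta_{\ell,C},\cdot)\), which fails, for example, if \(K_{\ell-1}^{\tau}\) draws independently from \(P_{\ell-1}\). Your explicit witness is what actually makes ``need not coincide'' rigorous.

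Your two-state example goes through and also confirms your optional claim. Take a symmetric switching probability \(p\), uniform \(P_{\ell-1}\), \(\tau=1\), no fine modes, and \(r=P_\ell(2)/P_\ell(1)<1\). The recurrent augmented states, ordered as \((\theta_\ell,\vartheta)\), are \((1,1),(1,2),(2,2)\), and the coarse marginal forces mass \(1/2\) on \((1,1)\). Stationarity then gives \(\mathbb{P}(\theta_\ell=1)=1/2+p(1-r)/(2(p+r-pr))\). This equals \(P_\ell(1)=1/(1+r)\) only for \(p=1/2\), that is, only for independent proposals. The discrepancy is \((1-r)r(2p-1)/(2(p+r-pr)(1+r))\), which is first order in \(1-r\). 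Replacing \(p\) by the \(\tau\)-step switching probability shows that it decays like \(|1-2p|^{\tau}\), in line with Hypothesis~\ref{hyp:lower_level_forgetting}.

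Your route therefore proves strictly more than the proposition: actual non-invariance, not just inapplicability of the theorem, at the cost of a finite computation. The paper's purely structural argument covers the general recursive setting, but it leaves open whether the practical chain might be exact for some other reason.
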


\begin{proof}
The reset kernel proposes the coarse component from \(K_{\ell-1}^{\tau}(\theta_{\ell,C}^n,\cdot)\), where \(\theta_{\ell,C}^n\) is the coarse component of the currently accepted level-\(\ell\) state. In the continued-chain implementation, the lower-level state used for the next proposal may already have moved during one or more rejected fine-level proposals. The current accepted fine state is therefore not sufficient to determine the next proposal law. One has to augment the state with at least the accepted fine state, the current lower-level state from which the next coarse proposal will be generated, and the rejection or recursive history needed to determine future proposals. On this augmented state, the actual proposal law differs from the reset proposal law whenever the continued lower-level state differs from the accepted coarse component. The detailed-balance calculation in Theorem~\ref{thm:subsampled_accrate} therefore applies directly only to the reset kernel. Exactness of the practical chain would require a separate detailed-balance or invariance proof for its actual augmented transition.
\end{proof}

\begin{remark}
The continued-chain caveat in Proposition~\ref{prop:scope_practical_chain} is not introduced by the subsampling-rate criterion proposed in this work. It is already present in the practical recursive MLMCMC implementation of~\cite{dodwell_hierarchical_2015}, where a lower-level chain continues after a rejection at the next finer level. The role of the reset calculation is therefore limited but useful: it shows that independence is not the essential requirement in the reset transition that leaves the fine-level posterior invariant. For the practical continued-chain implementation, the remaining finite-subsampling effect is treated as a perturbation and checked against the numerical error budget.
\end{remark}

Within the reset reference setting, \cite[Lemma\,3.1]{dodwell_hierarchical_2015} remains valid: Theorem~\ref{thm:subsampled_accrate} extends the same calculation to reversible coarse proposals and shows that \(\alpha_{\ell}\) does not rely on \(q^{\mathrm{ML}}_{\ell,C}\) being an independent sampler from \(P_{\ell-1}\). This differs from MLDA, where the proposal and rejection bookkeeping are changed so that detailed balance holds for the delayed-acceptance transition being implemented~\cite{lykkegaard_multilevel_2023}. Here, the reset kernel is used only as a finite-\(\tau\) reference transition for the perturbation diagnostic applied below to the practical continued-chain implementation; it is not used as a proof that the lower-level marginal in the multilevel correction is exactly \(P_{\ell-1}\).

\subsection{Effect of subsampling on the cost}
\label{ch: effect subsampling}

The impact of the subsampling rate on the computational cost of the MLMCMC algorithm can be quantified directly. Let $\mathcal{C}_\ell$ denote the cost of generating a single sample at level $\ell$. When using the subsampling approach generating one sample at level $\ell$ requires $\tau_\ell$ additional samples from level $\ell-1$, resulting in a total cost of

\begin{equation}
\mathcal{C}_{\ell}^{\text{tot}} = \mathcal{C}_{\ell} + \tau_{\ell}\mathcal{C}^{\text{tot}}_{\ell-1},
\label{eq:samplecost}
\end{equation}

where $\mathcal{C}^{\text{tot}}_0 = \mathcal{C}_0$. Recursively applying this cost formula gives

\begin{equation}
\mathcal{C}_{\ell}^{\text{tot}}
=
\mathcal{C}_{\ell}
+ \tau_{\ell}\mathcal{C}_{\ell-1}
+ \tau_{\ell}\tau_{\ell-1}\mathcal{C}_{\ell-2}
+ \cdots
+ \left(\prod_{j=1}^{\ell}\tau_j\right)\mathcal{C}_0 .
\end{equation}

As the subsampling rate increases, so too does the cost per sample. In practice, however, we are not interested in the cost per sample, but rather the cost per effective sample which depends on the IAT of the Markov chain. Denote the IAT at level~$\ell$ 
by $T_{\ell}$. The total cost per effective sample at level~$\ell$ is then

\begin{equation}
\mathcal{C}_{\ell}^{\text{eff}} = \left\lceil T_{\ell} \right\rceil \mathcal{C}_{\ell}^{\text{tot}}.
\label{eq:effective_cost}
\end{equation}

As the subsampling rate increases, $T_{\ell}$ decreases, creating two opposing effects: the cost per sample rises, but fewer samples are required to obtain one effective sample. To quantify the combined impact of these competing factors we perform a simple two-level experiment using the MLMCMC algorithm across a range of subsampling rates.

We use the elasticity beam model introduced in Subsection~\ref{sec:plane_stress_problem}, where the underlying random field is represented via a KL expansion truncated after 250 terms and synthetic observations are generated by discretising the field on an FE grid consisting of 128 $\times$ 32 elements.

To obtain these numerical results, we employ a likelihood-scaled variant of the MLMCMC algorithm (Equation~\eqref{eq:levellike}) applied on two levels (\(\ell = 0, 1\))~\cite{vanmechelen_multilevel_2025}. At the coarse level, the random field is represented using \(M_0=16\) terms in the KL expansion and discretised on a FE grid of \(16 \times 4\) elements. At the fine level we use \(M_1=40\) KL terms with a finer resolution of \(32 \times 8\) FEs. To mitigate stochastic variability, the experiment is repeated over seven independent random seeds for each value of \(\tau\). In each simulation, a coarse-level chain of length \(1.5\times 10^{7}\) samples is generated, while the fine-level chain length varies with the subsampling rate \(\tau\), which is treated as an input parameter taking values between \(3\) and \(200\).

Since we are working with a two-level MCMC algorithm, the total cost per sample at
the finest level of Equation~\eqref{eq:samplecost} simplifies to
\(\mathcal{C}_1^{\text{tot}} = \mathcal{C}_1 + \tau_1\mathcal{C}_0\). A simple
2D finite-element scaling argument suggests that, when the number of elements
grows by a factor of four between adjacent grids, the fine-level solve cost
should also grow by an approximately constant factor. This is only a simplifying
model: measured wall-clock costs can deviate because of the linear solver, KL
truncation, likelihood evaluation, and MATLAB overhead. The numerical cost
comparisons below use measured per-sample timings from the runs rather than this
nominal scaling alone. At large subsampling rates \(\tau\), efficiency gains are
therefore only obtained if \(T_1^{\tau} < T_1^1/\tau\), where \(T_1^{\tau}\) is
the IAT of the chain at level 1 when using subsampling rate \(\tau\). This decay
is not realistic, as shown by the following basic lemma which holds in the case
of autoregressive correlations of order one (AR(1)):

\begin{lemma}
Let $\{\theta^n\}_{n=0}^{\infty}$ be a Markov chain with AR(1) autocorrelation function $\rho_k = \rho^k$ with $|\rho| < 1$. Define a subsampling rate $\tau \in \mathbb{N}$ and let $T^{\tau}$ be the integrated autocorrelation time of the subsampled chain $\{\theta^{n\tau}\}_{n=0}^{\infty}$. Then it holds that
\begin{equation}
T^{\tau} \geq \dfrac{T^1}{\tau},
\label{lemma 4.3 in}
\end{equation}
with equality only when $\tau=1$.
\end{lemma}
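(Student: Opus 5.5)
The plan is to compute both integrated autocorrelation times in closed form from the AR(1) structure and then reduce \eqref{lemma 4.3 in} to an elementary polynomial inequality in \(\rho\). First, from Equation~\eqref{autocorrelation_equation} with \(\rho(t)=\rho^t\), summing the geometric series gives
\[
T^1 = 1+2\sum_{t\ge1}\rho^t = \frac{1+\rho}{1-\rho}.
\]
The subsampled chain \(\{\theta^{n\tau}\}\) has lag-\(k\) autocorrelation \(\rho_{k\tau}=(\rho^\tau)^k\). Since \(|\rho^\tau|<1\), the same computation gives
\[
T^\tau = \frac{1+\rho^\tau}{1-\rho^\tau}.
\]

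Next I would clear denominators. Because \(1-\rho>0\) and \(1-\rho^\tau>0\), the claim \(T^\tau\ge T^1/\tau\) is equivalent to
\[
\tau(1+\rho^\tau)(1-\rho)\ \ge\ (1+\rho)(1-\rho^\tau).
\]
Factor \(1-\rho^\tau=(1-\rho)S\) with \(S=\sum_{k=0}^{\tau-1}\rho^k\), and divide by \(1-\rho>0\). The inequality becomes \(\tau(1+\rho^\tau)\ge(1+\rho)S\). Expanding the right-hand side telescopically gives \(1+2\sum_{k=1}^{\tau-1}\rho^k+\rho^\tau\), so the claim is equivalent to
\[
(\tau-1)(1+\rho^\tau)\ \ge\ 2\sum_{k=1}^{\tau-1}\rho^k .
\]

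The key step, and the only non-mechanical one, is to prove this inequality uniformly in \(\rho\in(-1,1)\), including negative \(\rho\). I would not try a monotonicity argument; instead I would pair the terms \(k\) and \(\tau-k\). For each \(1\le k\le\tau-1\),
\[
1+\rho^\tau-\rho^k-\rho^{\tau-k}=(1-\rho^k)(1-\rho^{\tau-k}).
\]
Both factors are strictly positive because \(|\rho^j|<1\) for \(j\ge1\). Summing this identity over \(k=1,\dots,\tau-1\) yields
\[
(\tau-1)(1+\rho^\tau)-2\sum_{k=1}^{\tau-1}\rho^k=\sum_{k=1}^{\tau-1}(1-\rho^k)(1-\rho^{\tau-k}),
\]
which is \(\ge 0\) and proves \eqref{lemma 4.3 in}.

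For the equality statement, when \(\tau=1\) the sum is empty, so both sides coincide, as is also obvious directly from \(T^\tau=T^1\). When \(\tau\ge2\) the sum contains at least one term, and each term is strictly positive. Hence the inequality is strict for every \(\rho\in(-1,1)\), including \(\rho=0\) where both sides are checked directly. This gives equality only for \(\tau=1\).
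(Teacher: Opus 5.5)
Your proof is correct. The closed forms $T^1=(1+\rho)/(1-\rho)$ and $T^\tau=(1+\rho^\tau)/(1-\rho^\tau)$ check out, as do the reduction to $(\tau-1)(1+\rho^\tau)\ge 2\sum_{k=1}^{\tau-1}\rho^k$ and the pairing identity. Together they give strict inequality for $\tau\ge 2$ uniformly in $\rho\in(-1,1)$, negative $\rho$ included.

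The paper gives no argument of its own and only refers to Link and Eaton, so your computation is a genuinely self-contained alternative. What it buys is an exact formula for the gap,
\[
T^\tau-\frac{T^1}{\tau}=\frac{1}{\tau(1-\rho^\tau)}\sum_{k=1}^{\tau-1}(1-\rho^k)(1-\rho^{\tau-k}),
\]
which makes both the inequality and the equality case transparent.

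The standard argument in the thinning literature is non-parametric instead. The full-chain average over $n\tau$ steps is the mean of $\tau$ interleaved thinned averages. Each of these has the variance of the thinned estimator, so by Cauchy--Schwarz the full-chain average has no larger variance. Multiplying by $n$ and letting $n\to\infty$ gives $T^1/\tau\le T^\tau$ for any stationary chain with absolutely summable autocorrelations. Equivalently, with $g(\omega)=\sum_{t\in\mathbb{Z}}\rho(t)e^{-it\omega}\ge 0$, aliasing gives $T^\tau=\tau^{-1}\sum_{j=0}^{\tau-1}g(2\pi j/\tau)\ge g(0)/\tau=T^1/\tau$.

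That route shows the weak inequality is not specific to AR(1). It does not by itself give strictness, which needs extra information such as positivity of $g$ at the aliased frequencies $2\pi j/\tau$, $j=1,\dots,\tau-1$. Your pairing identity supplies this directly in the AR(1) case.
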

\begin{proof}
The proof can be found in \cite{link_thinning_2012}.
\end{proof}

For more general correlation structures Inequality \eqref{lemma 4.3 in} does not always hold since the effect of subsampling on the full autocorrelation sequence can be substantially more intricate. Nevertheless, similar lower bounds can often be derived when the chain exhibits sufficiently regular decay of correlations or a well-behaved spectral density.

 In the left plot of Figure~\ref{fig:subsamp_cost} we illustrate the effect of subsampling on the IAT of the chains generated by our two-level experiment. The chain at level 0 is generated by a Metropolis-Hastings algorithm and has an IAT of approximately 976. The black line in this plot shows the IAT of the subsampled coarse chain $\{\theta_0^{n\tau}\}_{n\in\mathbb{N}}$, with the value of $T_0/\tau$ as a dashed line. For small values of \(\tau\), the two curves agree closely. 
However, as \(\tau\) increases, the IAT of the subsampled chain decreases more slowly than \(T_0/\tau\), indicating a loss of efficiency due to subsampling \(\theta_0\). Therefore, subsampling, especially at large rates $\tau$, can reduce the efficiency of the algorithm.

\begin{figure}[H]
\centering
\includegraphics[width=\textwidth]{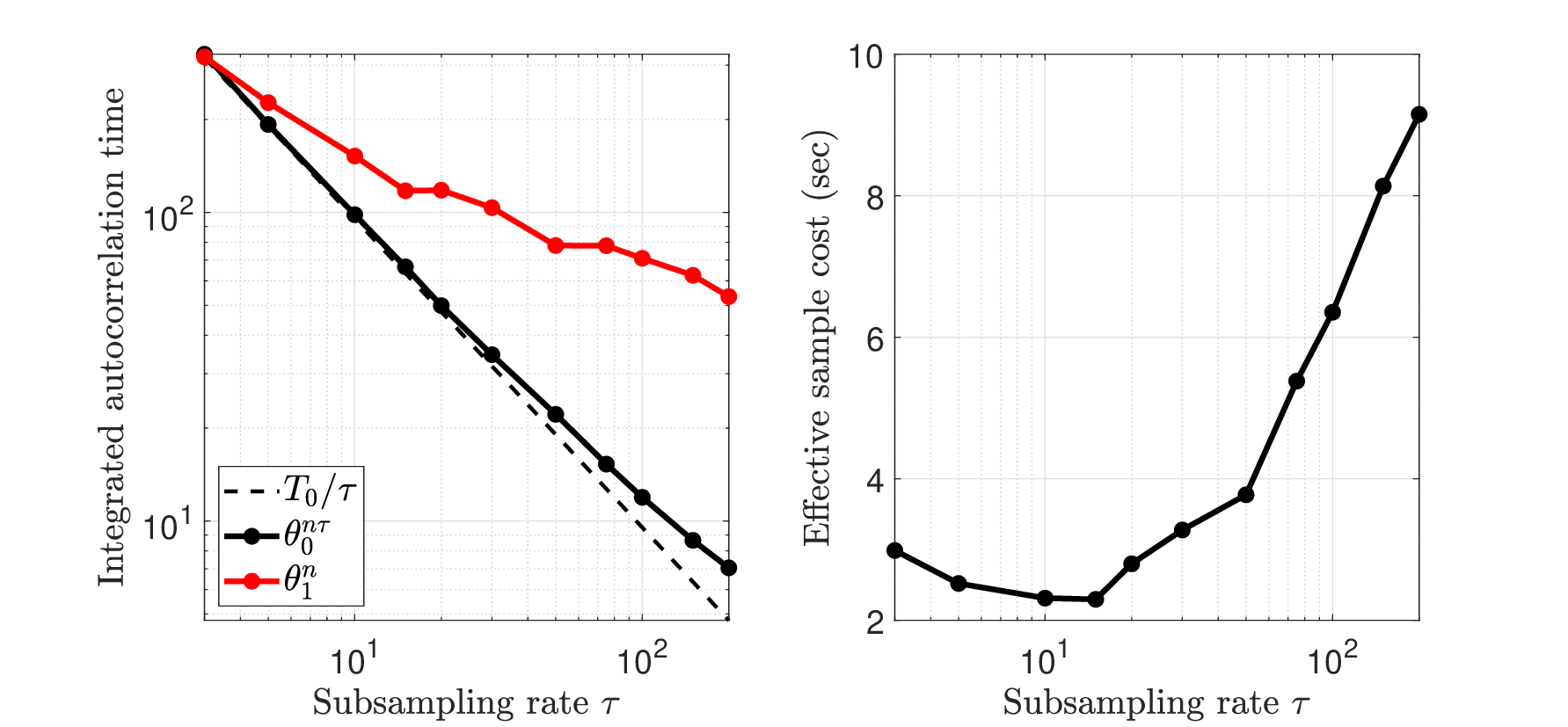}
\caption{Left: IATs of subsampled chain at level 0 (black) and full chain at level 1 (red) for varying subsampling rate. Right: cost per effective sample $\mathcal{C}_1^{\text{eff}}$ for varying subsampling rates.}
\label{fig:subsamp_cost}
\end{figure}

Note that it is not the subsampled chain $\{\theta_0^{n\tau}\}_{n\in\mathbb{N}}$ itself which is used in the multilevel algorithm. Rather, these samples are used as proposals for the coarse modes of the fine chain $\theta_1$. Because these proposals themselves are then subjected to an additional MH accept-reject step, the resulting fine level chain $\theta_1$ is more strongly correlated than the subsampled chain $\{\theta_0^{n\tau}\}_{n\in\mathbb{N}}$. The decay of the IAT at the fine level goes significantly slower than $T_0/\tau$.

The right plot of Figure~\ref{fig:subsamp_cost} shows the resulting effect of the slow decay of IAT on the cost per effective sample as calculated from Equation~\eqref{eq:effective_cost}. For low values of $\tau$ the cost initially decreases since the IAT still decreases significantly with increasing $\tau$. Additionally, the value of $\mathcal{C}_1^{\text{tot}}$ in Equation~\eqref{eq:samplecost} is still strongly influenced by $\mathcal{C}_1$ for low values of $\tau$ and the cost per sample does not scale strongly with $\tau$. As $\tau$ continues to increase however, the IAT decays more slowly while the increase in cost is more noticeable, leading to a clear increase in cost per effective sample.

 The above figure finds an optimum of $\mathcal{C}_1^{\text{eff}}$ around $\tau \approx 15$. Beyond this value the cost per effective sample increases considerably with the subsampling rate. It is important to note that the plot includes subsampling rates only up to \(\tau = 200\), which remains well below the coarse-level IAT of 976, the subsampling rate recommended in the original algorithm.
 This demonstrates that substantial computational savings can be achieved by carefully selecting $\tau$.

\subsection{Effect of subsampling on the accuracy}
\label{effect accuracy}
In addition to reducing the autocorrelation of finer levels, subsampling the coarse chain also affects the bias in the MC estimate of the fine-level correction term $Y_{\ell}$. Investigation of the bias under varying choices of subsampling rate $\tau$ has previously been carried out by \cite{dodwell_hierarchical_2015} in the original development of the MLMCMC algorithm. In their model elliptic PDE benchmark, they reported a bias on $\mathbb{E}[Y_1]$ of less than 30\% even if no subsampling was used (i.e. $\tau_1 = 1$), dropping to under 10\% even for a low subsampling rate of $\tau_1=15$ on an original IAT of $T_0\approx 86$. We compare the corresponding diagnostics for the elasticity beam benchmark and for an existing Darcy-flow benchmark. The purpose is to test whether the same autocorrelation-driven effect is visible in these benchmarks.

\subsubsection{Elasticity beam problem}
\label{sec:plane_stress_problem}
The primary numerical benchmark is based on the two-dimensional linear elasticity equations on a rectangular domain representing a cantilever. In this example, the unknown parameter $\theta$ represents the spatially varying Young's modulus $E$ of the material throughout the domain. The shear modulus is given by \(G = \frac{\theta}{2(1+\tilde{\nu})}\), where \(\tilde{\nu}\) denotes the Poisson ratio of the material. We assume \(\tilde{\nu}\) is constant and satisfies \(\tilde{\nu} < 0.5\). We introduce the Lam\'e parameters
\begin{equation}
\tilde{\lambda} = \dfrac{\theta\tilde{\nu}}{(1+\tilde{\nu})(1-2\tilde{\nu})}, \qquad \tilde{\mu} = G = \dfrac{\theta}{2(1+\tilde{\nu})}.
\label{eq:lame}
\end{equation}
Let $I$ denote the identity matrix. Combining these equations gives the linear elasticity equations,
\begin{equation}
-\nabla\cdot \left[\tilde{\lambda}(\nabla\cdot u)I + \tilde{\mu}(\nabla u + (\nabla u)^T)\right] = F,
\label{eq:linear_elasticity}
\end{equation}
where \(F: \mathbb{R}^2 \to \mathbb{R}^2\) denotes the known body forces acting on the system, while \(u: \mathbb{R}^2 \to \mathbb{R}^2\) represents the displacement field. Synthetic observations are generated on the finest elasticity discretization and are restricted to the coarser observation sets through the weighting operator used in Equation~\eqref{eq:levellike}.

\subsubsection{Darcy-flow problem}
\label{sec:darcy_problem}
The Darcy problem models flow through porous media~\cite{iqbal2015fundamentals,cliffe_multilevel_2011,dodwell_hierarchical_2015} using
\begin{equation}
q+k\nabla p=g,\qquad \nabla\cdot q=0,
\label{eq:darcy_first_order}
\end{equation}
or, equivalently,
\begin{equation}
-\nabla\cdot\left(k(x,\omega)\nabla p(x,\omega)\right)=f(x).
\label{eq:darcy_second_order}
\end{equation}
Here \(p\) is the pressure, \(q\) is the Darcy flux, and the uncertain coefficient is the permeability \(k\). For the supporting Darcy-flow experiment, \(D=[0,1]^2\), \(f\equiv -1\), and the boundary conditions are
\begin{equation}
p|_{x_1=0}=0,\qquad p|_{x_1=1}=1,\qquad
\frac{\partial p}{\partial n}\bigg|_{x_2=0}=0,\qquad
\frac{\partial p}{\partial n}\bigg|_{x_2=1}=0 .
\label{eq:darcy_bc}
\end{equation}
The permeability is lognormal and represented by a truncated KL expansion, as is standard for random fields with elliptic PDE coefficients~\cite{ghanem2003stochastic,zheng2017simulation,charrier_strong_2012}. The underlying Gaussian field uses the separable exponential covariance
\begin{equation}
C_{\mathrm{D}}(x,y)
=
\exp\left(-\frac{|x_1-y_1|}{\lambda_{\mathrm{D}}}\right)
\times
\exp\left(-\frac{|x_2-y_2|}{\lambda_{\mathrm{D}}}\right),
\qquad \lambda_{\mathrm{D}}=0.5,
\label{eq:darcy_covariance}
\end{equation}
where the product reflects the two spatial directions. The one-dimensional KL
eigenpairs are obtained from the corresponding transcendental root equation. The
resulting truncated permeability field is
\begin{equation}
k_R(x,\omega)=
\exp\left(\sum_{r=1}^{R}\sqrt{\mu_r}\,\phi_r(x)\xi_r(\omega)\right),
\label{eq:darcy_kl}
\end{equation}
and the Darcy QoI used here is the average pressure
\begin{equation}
Q_{\mathrm{D}}=\int_0^1\int_0^1 p(x_1,x_2)\,dx_1\,dx_2 .
\label{eq:darcy_qoi}
\end{equation}
To obtain the Darcy numerical results in Figures~\ref{fig:bias_effect}
and~\ref{fig:poisson_bias_diagnostic}, we use the same MLMCMC implementation
with the Darcy forward model substituted for the elasticity solver. The
comparison is performed on two levels, \(\ell=0,1\). At the coarse level, the
log-permeability field is represented using \(M_0=15\) terms in the KL
expansion and discretized on an \(8\times8\) grid. At the fine level, we use
\(M_1=30\) KL terms and a \(16\times16\) grid. Synthetic observations are
generated from a reference field with \(120\) KL terms, and the likelihood uses
\(16\) pressure observations on \([0.2,0.8]^2\) with observation noise
\(\sigma=10^{-8}\). The pCN proposal parameter is \(\beta=0.1\). For each
subsampling rate, the VSC runs use \(60\) independent random seeds. The base
chain length is set to \(\tau_1 10^5\), so the level-one correction chain has
approximately \(10^5\) samples before the \(10\%\) burn-in removal. 

The Darcy figures are used only as supporting evidence that finite subsampling
has qualitatively similar behaviour in a second elliptic inverse problem.

\subsubsection{Correction term estimate comparison}
Figure~\ref{fig:bias_effect} shows the MC estimate for $\mathbb{E}[Y_1]$ for a range of different subsampling rates in both test problems. The reference estimator in each panel is the mean correction obtained from the largest available subsampling rate in the same experimental family, using the same post-burn-in estimator for \(Y_1\). The numerical values discussed below refer to the elasticity beam benchmark. The outcome is similar to that reported in \cite{dodwell_hierarchical_2015}: while the IAT of the beam coarse chain, and thus the benchmark choice for $\tau_1$, is approximately 976, the estimates stabilize for considerably lower values of $\tau_1$. Around $\tau\approx 75$ there is little visible systematic bias remaining relative to the Monte Carlo scatter. Even at the lowest end, for $\tau_1=3$, the observed relative difference is about \(5\%\) of the correction mean. The relative effect on the final multilevel estimate for \(Q\) is smaller, since in the beam experiment the correction term \(Y_1\) is about an order of magnitude smaller than \(Q\) itself.

These data, for either benchmark, do not prove that the invariant measures of the practical finite-\(\tau\) accepted chains coincide. Part of the observed discrepancy can be finite-chain Monte Carlo error, especially at high subsampling rates where fewer fine samples are available, but a small structural perturbation of the effective transition kernel is also possible. We therefore do not claim that the low-\(\tau\) bias necessarily disappears by running longer chains unless the augmented-chain invariant measure can be shown to be unchanged. Longer runs reduce Monte Carlo uncertainty and make the perturbation easier to estimate; they do not, by themselves, prove absence of perturbation bias.

\begin{figure}[H]
\centering
\begin{minipage}[t]{0.48\textwidth}
\centering
\includegraphics[width=\textwidth]{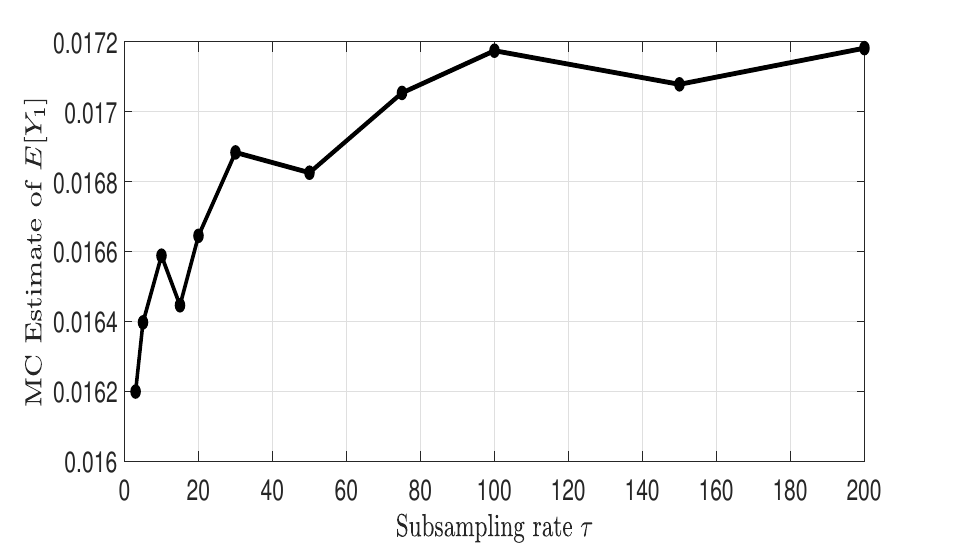}\\
Elasticity beam
\end{minipage}\hfill
\begin{minipage}[t]{0.48\textwidth}
\centering
\includegraphics[width=\textwidth]{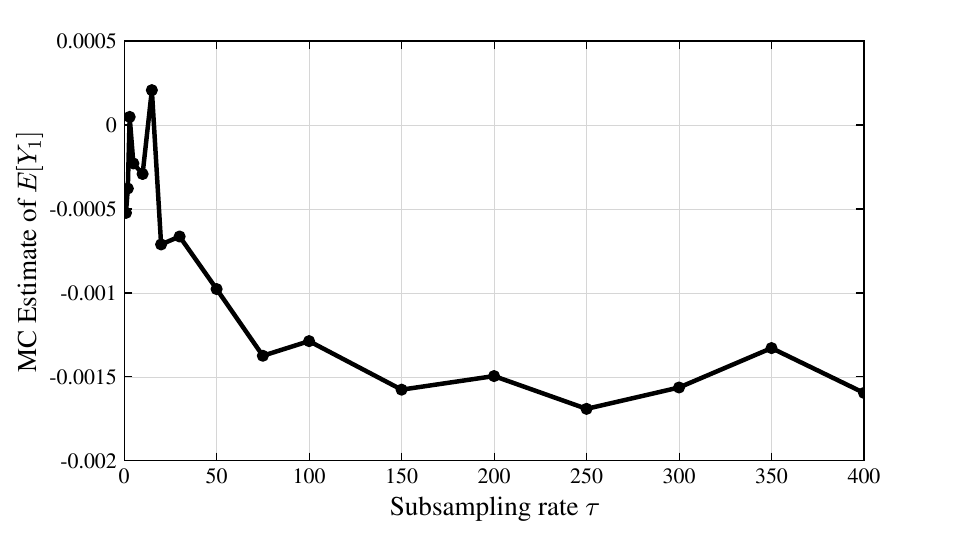}\\
Darcy flow
\end{minipage}
\caption{MC estimate of the correction term $Y_1$ in the multilevel hierarchy for varying subsampling rate $\tau_1$. Left: elasticity beam benchmark. Right: Darcy-flow benchmark.}
\label{fig:bias_effect}
\end{figure}

In what follows, we compare the finite-subsampling effect with the dominant
numerical error scales. For the rate selected by the proposed tuning rule, the
perturbation indicators are below the finest-level discretization proxy used in
the reported multilevel error budget, but they are not below the paired Monte
Carlo error of the level-one correction alone. Here, the finest-level
discretization proxy is the absolute estimated mean of the last multilevel
correction, \(\widehat d_L=|\widehat{\mathbb{E}}[Y_L]|\). It measures the change
in the posterior QoI expectation between the two finest discretization levels.
When the mean corrections decay regularly under refinement, this quantity
provides an empirical indication of the remaining discretization bias, rather
than a direct estimate or a rigorous bound. The diagnostic therefore shows a
visible finite-subsampling effect that remains smaller than the reported
discretization scale, rather than an effect hidden by Monte Carlo noise.

\subsection{Perturbation bias and a signed Poisson diagnostic}
\label{sec:poisson_bias_diagnostic}
Subsection~\ref{justification} shows that the reset reference kernel is exact,
while the practical continued-chain implementation can perturb the transition
seen by the correction chain. We quantify this finite-subsampling effect on the
augmented state \(X_n^\tau\), which contains the accepted level-\(\ell\) state,
the live lower-level state used for future coarse proposals, and the bookkeeping
needed to determine the next proposal. Let \(f(X_n^\tau)=Y_\ell\). If the
augmented process has transition kernel \(\mathcal{K}_{\tau}\) and invariant
distribution \(\mu_{\tau}\), its bias relative to a reference augmented kernel
\(\mathcal{K}_{\star}\), with invariant distribution \(\mu_{\star}\), is
\(B_{\tau}=\mu_{\tau}(f)-\mu_{\star}(f)\).

The Poisson-equation identity below is a standard way to express this
bias~\cite{glynn_meyn_1996,mijatovic_vogrinc_2018}; the total-variation bound
uses the usual dual characterization of total variation~\cite[Section~4.1]{levin_peres_wilmer_2017}. The argument is also in the spirit of
perturbation bounds for uniformly ergodic chains~\cite{mitrophanov_2005} and
approximate kernels in noisy MCMC~\cite{alquier_noisy_2016}. The argument is
separated into a general perturbation identity, an algorithm-specific kernel
bound, and the resulting bias bound. The kernel bound uses the following
state-averaged forgetting condition.

\begin{hypothesis}[State-averaged lower-level forgetting]
\label{hyp:lower_level_forgetting}
For a fixed level \(\ell\), let \(K_{\ell-1}\) denote the level-\((\ell-1)\)
kernel used to generate coarse proposals. Let
\(z=(\theta_{\ell},\eta_{\ell-1},h)\) denote a generic state of the practical
augmented chain in stationarity, with law \(\mu_{\tau}\). Here
\(\eta_{\ell-1}\) is the live lower-level state used for the next coarse
proposal, \(h\) collects the remaining bookkeeping variables, and
\(\theta_{\ell,C}\) is the coarse component of the accepted level-\(\ell\)
state. Define
\begin{equation}
    \Delta_{\ell}(\tau)
    =
    \int
    \left\|
    K_{\ell-1}^{\tau}(\eta_{\ell-1},\cdot)
    -
    K_{\ell-1}^{\tau}(\theta_{\ell,C},\cdot)
    \right\|_{\mathrm{TV}}
    \,\mu_{\tau}(dz).
\end{equation}
There exist constants \(A_{\ell}<\infty\) and \(0<\rho_{\ell}<1\) such that,
for every \(\tau\geq1\),
\begin{equation}
    \Delta_{\ell}(\tau) \leq A_{\ell}\rho_{\ell}^{\tau}.
\label{eq:lower_level_forgetting}
\end{equation}
\end{hypothesis}

This assumption concerns forgetting between the two lower-level states
encountered in the practical and reset constructions, rather than uniform
mixing over the complete state space. We do not verify the condition
theoretically for the numerical benchmarks; the signed-Poisson calculation below
is therefore used as an empirical error-budget diagnostic.

\begin{theorem}[Poisson perturbation identity on the augmented chain]
\label{thm:poisson_perturbation}
Let \(\mathcal{K}_{\tau}\) be the transition kernel of the practical augmented
chain and let \(\mathcal{K}_{\star}\) be a reference transition kernel on the
same augmented state space. Assume that \(\mathcal{K}_{\star}\) is ergodic and
that the centered correction
\[
\bar f=f-\mu_{\star}(f)
\]
admits a bounded solution \(g_{\star}\) of the Poisson equation
\begin{equation}
g_{\star}-\mathcal{K}_{\star}g_{\star}=\bar f .
\label{eq:poisson_equation_bias}
\end{equation}
If \(\mu_{\tau}\) is invariant for \(\mathcal{K}_{\tau}\), then the bias in the
expected correction can be written, by the standard Poisson-equation
perturbation identity~\cite{glynn_meyn_1996}, as
\begin{equation}
B_{\tau}
=
\mu_{\tau}\bigl((\mathcal{K}_{\tau}-\mathcal{K}_{\star})g_{\star}\bigr).
\label{eq:signed_poisson_identity}
\end{equation}
In particular, it satisfies the total-variation bound
\begin{equation}
\left|B_{\tau}\right|
\leq
\mathrm{osc}(g_{\star})
\int
\left\|
\mathcal{K}_{\tau}(x,\cdot)
-
\mathcal{K}_{\star}(x,\cdot)
\right\|_{\mathrm{TV}}
\,\mu_{\tau}(dx),
\label{eq:poisson_tv_bound}
\end{equation}
where \(\mathrm{osc}(g_{\star})=\sup_x g_{\star}(x)-\inf_x g_{\star}(x)\).
\end{theorem}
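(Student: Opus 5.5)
The plan is to integrate the Poisson equation~\eqref{eq:poisson_equation_bias} against the invariant law \(\mu_{\tau}\) of the practical kernel, and then turn the resulting signed identity into a total-variation bound. Because \(g_{\star}\) is bounded, every integral below is finite. Applying \(\mu_{\tau}\) to both sides of \(g_{\star}-\mathcal{K}_{\star}g_{\star}=\bar f\) gives
\[
\mu_{\tau}(g_{\star})-\mu_{\tau}(\mathcal{K}_{\star}g_{\star})=\mu_{\tau}(f)-\mu_{\star}(f)=B_{\tau}.
\]
We then use invariance of \(\mu_{\tau}\) under \(\mathcal{K}_{\tau}\), which for bounded measurable \(g_{\star}\) reads \(\mu_{\tau}(\mathcal{K}_{\tau}g_{\star})=\mu_{\tau}(g_{\star})\). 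Substituting this into the first term yields
\[
B_{\tau}=\mu_{\tau}(\mathcal{K}_{\tau}g_{\star})-\mu_{\tau}(\mathcal{K}_{\star}g_{\star})=\mu_{\tau}\bigl((\mathcal{K}_{\tau}-\mathcal{K}_{\star})g_{\star}\bigr),
\]
which is \eqref{eq:signed_poisson_identity}. The only care needed is that \(\mu_{\star}\) enters solely through the centering constant, and that \(\mu_{\tau}(\bar f)\) equals \(B_\tau\) because \(\mu_{\tau}\) is a probability measure.

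For the bound~\eqref{eq:poisson_tv_bound}, fix a state \(x\) and consider the signed measure \(\nu_x=\mathcal{K}_{\tau}(x,\cdot)-\mathcal{K}_{\star}(x,\cdot)\). It has total mass zero, so \(\nu_x(g_{\star})=\nu_x(g_{\star}-c)\) for every constant \(c\). Take \(c=\tfrac12(\sup g_{\star}+\inf g_{\star})\); then \(\|g_{\star}-c\|_\infty=\tfrac12\mathrm{osc}(g_{\star})\). The dual characterization of total variation~\cite[Section~4.1]{levin_peres_wilmer_2017} gives \(|\nu_x(h)|\le 2\|h\|_\infty\|\nu_x\|_{\mathrm{TV}}\) under the convention \(\|\nu\|_{\mathrm{TV}}=\sup_A|\nu(A)|\). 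It follows that
\[
|\nu_x(g_{\star})|\le \mathrm{osc}(g_{\star})\,\|\nu_x\|_{\mathrm{TV}}.
\]
Integrating this pointwise bound against \(\mu_{\tau}(dx)\), after moving the absolute value inside the integral, gives~\eqref{eq:poisson_tv_bound}. For this step \(x\mapsto\|\nu_x\|_{\mathrm{TV}}\) must be measurable, which is standard for kernels on Polish spaces (or when the supremum can be taken over a countable generating algebra), and I would note this assumption explicitly.

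The step I expect to need the most care is keeping the total-variation normalization consistent. If the paper instead uses \(\|\nu\|_{\mathrm{TV}}=\sup_{|h|\le1}|\nu(h)|\), the oscillation bound only holds with a factor \(\tfrac12\), so I would fix the convention \(\sup_A|\nu(A)|\) to make~\eqref{eq:poisson_tv_bound} hold exactly as stated. The second point to check is that ergodicity of \(\mathcal{K}_{\star}\) is used only to give meaning to \(\mu_{\star}\) and the existence of \(g_{\star}\), which is already assumed; nothing requires \(\mu_{\tau}\) to be unique, so the identity holds for any invariant \(\mu_{\tau}\).
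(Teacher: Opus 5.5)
Your proposal is correct and follows the paper's proof: you integrate the Poisson equation against \(\mu_\tau\), replace \(\mu_\tau(g_\star)\) by \(\mu_\tau(\mathcal{K}_\tau g_\star)\) using invariance, and bound the zero-mass signed measure \(\mathcal{K}_\tau(x,\cdot)-\mathcal{K}_\star(x,\cdot)\) pointwise through the oscillation of \(g_\star\) before integrating. One small correction to your last remark: under the normalization \(\sup_{|h|\le1}|\nu(h)|\) the norm is twice as large, so \eqref{eq:poisson_tv_bound} still holds, only with slack, and the choice of convention therefore affects sharpness rather than validity.
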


\begin{proof}
Since \(g_{\star}\) solves~\eqref{eq:poisson_equation_bias},
\[
B_{\tau}
=
\mu_{\tau}(\bar f)
=
\mu_{\tau}\bigl(g_{\star}-\mathcal{K}_{\star}g_{\star}\bigr).
\]
Using the invariance relation
\(\mu_{\tau}\mathcal{K}_{\tau}=\mu_{\tau}\), we also have
\(\mu_{\tau}(g_{\star})=\mu_{\tau}(\mathcal{K}_{\tau}g_{\star})\). Therefore
\[
B_{\tau}
=
\mu_{\tau}\bigl((\mathcal{K}_{\tau}-\mathcal{K}_{\star})g_{\star}\bigr),
\]
which proves~\eqref{eq:signed_poisson_identity}. The total-variation bound
follows by applying the definition of total variation to the signed measure
\(\mathcal{K}_{\tau}(x,\cdot)-\mathcal{K}_{\star}(x,\cdot)\), integrating the
result with respect to \(\mu_{\tau}\), and using the oscillation seminorm of
\(g_{\star}\).
\end{proof}

\begin{proposition}[Averaged augmented-kernel closeness]
\label{prop:mixing_to_kernel_perturbation}
Fix level \(\ell\) and a subsampling rate \(\tau\). Let
\(z=(\theta_{\ell},\eta_{\ell-1},h)\) denote an augmented practical state, where
\(\eta_{\ell-1}\) is the live lower-level state used to generate the next coarse
proposal and \(h\) denotes the remaining bookkeeping variables. Let
\(\theta_{\ell,C}\) be the coarse component of the accepted level-\(\ell\) state.
Let \(\mathcal{K}_{\ell,\tau}^{\mathrm{cont}}\) be the practical
continued-chain augmented kernel. Let
\(\mathcal{K}_{\ell,\tau}^{\mathrm{reset}}\) be the corresponding reference
kernel that uses the same fine proposal, acceptance probability, and
accept--reject bookkeeping, but starts the \(\tau\)-step lower-level proposal
segment from \(\theta_{\ell,C}\) instead of from \(\eta_{\ell-1}\). If
Hypothesis~\ref{hyp:lower_level_forgetting} holds, then
\begin{equation}
    \int
    \left\|
    \mathcal{K}_{\ell,\tau}^{\mathrm{cont}}(z,\cdot)
    -
    \mathcal{K}_{\ell,\tau}^{\mathrm{reset}}(z,\cdot)
    \right\|_{\mathrm{TV}}
    \,\mu_{\tau}(dz)
    \leq A_{\ell}\rho_{\ell}^{\tau}.
\label{eq:mixing_to_kernel_bound}
\end{equation}
\end{proposition}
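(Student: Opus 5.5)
The plan is a coupling (or, equivalently, a pushforward) argument. It shows that the two augmented kernels differ only through the law of the lower-level proposal segment. The averaged bound then follows from the data-processing property of total variation together with Hypothesis~\ref{hyp:lower_level_forgetting}.

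First, for a fixed augmented state \(z=(\theta_\ell,\eta_{\ell-1},h)\), I will write both one-step kernels as the image of the same randomization. Let \(\xi=(\xi_1,\dots,\xi_\tau)\) denote the \(\tau\)-step lower-level trajectory generating the coarse proposal. Let \(\theta_{\ell,F}'\sim q_{\ell,F}^{\mathrm{ML}}(\cdot\mid\theta_{\ell,F})\) be the fine proposal, and let \(r\sim\mathcal U([0,1])\) be the accept--reject variable. By construction, both \(\mathcal{K}_{\ell,\tau}^{\mathrm{cont}}\) and \(\mathcal{K}_{\ell,\tau}^{\mathrm{reset}}\) use the same fine proposal, the same acceptance probability, and the same bookkeeping. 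Hence there is a single measurable map \(\Phi_z\) such that the next augmented state is \(\Phi_z(\xi,\theta_{\ell,F}',r)\) in both cases. The only difference is the starting point of the lower-level segment: \(\eta_{\ell-1}\) for the continued kernel and \(\theta_{\ell,C}\) for the reset kernel.

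The \(\tau\)-step segment enters the update only through its endpoint \(\xi_\tau\), which serves as the proposal, and through the live lower-level state that is carried forward. In Algorithm~\ref{alg:mlmcmc} this carried-forward state is the same endpoint, so I take \(\Phi_z\) to depend on \(\xi\) only through \(\xi_\tau\). The laws of \(\xi_\tau\) are \(K_{\ell-1}^{\tau}(\eta_{\ell-1},\cdot)\) and \(K_{\ell-1}^{\tau}(\theta_{\ell,C},\cdot)\), respectively. The fine proposal and \(r\) are drawn independently of \(\xi\) and with identical laws under both kernels. Therefore the two transition laws are pushforwards under \(\Phi_z\) of the product measures \(K_{\ell-1}^{\tau}(\eta_{\ell-1},\cdot)\otimes\nu_z\) and \(K_{\ell-1}^{\tau}(\theta_{\ell,C},\cdot)\otimes\nu_z\), where \(\nu_z\) is the common law of \((\theta_{\ell,F}',r)\). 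Two facts then give the pointwise bound. Total variation does not increase under a measurable map. Taking a product with a common factor leaves the total-variation distance unchanged. Together they yield
\[
\bigl\|\mathcal{K}_{\ell,\tau}^{\mathrm{cont}}(z,\cdot)-\mathcal{K}_{\ell,\tau}^{\mathrm{reset}}(z,\cdot)\bigr\|_{\mathrm{TV}}
\leq
\bigl\|K_{\ell-1}^{\tau}(\eta_{\ell-1},\cdot)-K_{\ell-1}^{\tau}(\theta_{\ell,C},\cdot)\bigr\|_{\mathrm{TV}} .
\]
One could instead argue through a maximal coupling of the two endpoints, using the same \(\theta_{\ell,F}'\) and \(r\): the two next states coincide on the event that the endpoints coincide, and this gives the same inequality. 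Integrating against \(\mu_\tau(dz)\) gives exactly \(\Delta_\ell(\tau)\), and Hypothesis~\ref{hyp:lower_level_forgetting} bounds this by \(A_\ell\rho_\ell^\tau\), which is~\eqref{eq:mixing_to_kernel_bound}.

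The main obstacle is the modeling step: making precise that the reset kernel on the augmented space really shares \(\Phi_z\) with the continued kernel. This requires that the bookkeeping variables \(h\) and the updated live state be determined by the same function of the segment endpoint and \((\theta_{\ell,F}',r)\). I would handle this by \emph{defining} \(\mathcal{K}_{\ell,\tau}^{\mathrm{reset}}\) through this shared map. That is consistent with the statement's phrase "same fine proposal, acceptance probability, and accept--reject bookkeeping". If some implementation records the whole trajectory \(\xi\) in \(h\), the same argument goes through with the path measures of the two \(\tau\)-step segments in place of \(K_{\ell-1}^{\tau}\). In that case, however, the hypothesis would need to be stated for path measures rather than endpoint laws, and I would note this caveat explicitly.
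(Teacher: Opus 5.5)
Your proposal is correct and follows essentially the same route as the paper. Both arguments express the two augmented transitions as images of $K_{\ell-1}^{\tau}(\eta_{\ell-1},\cdot)$ and $K_{\ell-1}^{\tau}(\theta_{\ell,C},\cdot)$ under a common randomization (the paper composes with a common conditional kernel $M$ and then a common accept--reject map, while you push forward a product with $\nu_z$ under a shared $\Phi_z$), apply contraction of total variation, and integrate against $\mu_\tau$ to reach $\Delta_\ell(\tau)\le A_\ell\rho_\ell^{\tau}$. Your caveat that the bookkeeping must depend on the segment only through its endpoint, with path measures required otherwise, sharpens an assumption that the paper leaves implicit in the phrase ``generated in the same way in both constructions.''
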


\begin{proof}
For fixed \(z\), let \(Q_{\tau,z}^{\mathrm{cont}}\) and
\(Q_{\tau,z}^{\mathrm{reset}}\) be the proposal laws before the common
accept--reject step. The only different factor is the coarse proposal marginal,
which is \(K_{\ell-1}^{\tau}(\eta_{\ell-1},\cdot)\) in the continued chain and
\(K_{\ell-1}^{\tau}(\theta_{\ell,C},\cdot)\) in the reset chain. The remaining
proposal variables, including the fine-level proposal and the bookkeeping
variables needed for the subsequent accept--reject step, are generated in the
same way in both constructions. Thus, if \(M\) denotes this common conditional
kernel for the remaining proposal variables given the coarse proposal, the two
proposal laws can be written as
\[
Q_{\tau,z}^{\mathrm{cont}}
=
K_{\ell-1}^{\tau}(\eta_{\ell-1},\cdot)M,
\qquad
Q_{\tau,z}^{\mathrm{reset}}
=
K_{\ell-1}^{\tau}(\theta_{\ell,C},\cdot)M .
\]
Total variation distance is contracted by a common Markov
kernel~\cite[Chapter~4]{levin_peres_wilmer_2017}: for any two probability
measures \(\nu\) and \(\nu'\),
\[
\|\nu M-\nu' M\|_{\mathrm{TV}}
\leq
\|\nu-\nu'\|_{\mathrm{TV}} .
\]
Applying this property with
\(\nu=K_{\ell-1}^{\tau}(\eta_{\ell-1},\cdot)\) and
\(\nu'=K_{\ell-1}^{\tau}(\theta_{\ell,C},\cdot)\) gives
\[
\left\|
Q_{\tau,z}^{\mathrm{cont}}
-
Q_{\tau,z}^{\mathrm{reset}}
\right\|_{\mathrm{TV}}
\leq
\left\|
K_{\ell-1}^{\tau}(\eta_{\ell-1},\cdot)
-
K_{\ell-1}^{\tau}(\theta_{\ell,C},\cdot)
\right\|_{\mathrm{TV}}.
\]
The full transition kernels \(\mathcal{K}_{\ell,\tau}^{\mathrm{cont}}\) and
\(\mathcal{K}_{\ell,\tau}^{\mathrm{reset}}\) are obtained from these proposal
laws by applying the accept--reject and bookkeeping update.
The common MLMCMC acceptance probability and bookkeeping then define another
common Markov map from proposal variables to the next augmented state. Applying
the same contraction property once more, the same bound holds for
\(\mathcal{K}_{\ell,\tau}^{\mathrm{cont}}(z,\cdot)\) and
\(\mathcal{K}_{\ell,\tau}^{\mathrm{reset}}(z,\cdot)\). Integrating over
\(\mu_{\tau}(dz)\) and applying Hypothesis~\ref{hyp:lower_level_forgetting}
proves~\eqref{eq:mixing_to_kernel_bound}.
\end{proof}

\begin{corollary}[Geometric perturbation bias under lower-level forgetting]
\label{cor:geometric_poisson_bias}
Assume the hypotheses of Theorem~\ref{thm:poisson_perturbation} with
\(\mathcal{K}_{\tau}=\mathcal{K}_{\ell,\tau}^{\mathrm{cont}}\) and
\(\mathcal{K}_{\star}=\mathcal{K}_{\ell,\tau}^{\mathrm{reset}}\). If
Hypothesis~\ref{hyp:lower_level_forgetting} holds, then
\begin{equation}
\left|B_{\tau}\right|
\leq
A_{\ell}\,\mathrm{osc}(g_{\star})\,\rho_{\ell}^{\tau}.
\label{eq:geometric_poisson_bias}
\end{equation}
\end{corollary}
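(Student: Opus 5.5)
The corollary follows by chaining the total-variation bound of Theorem~\ref{thm:poisson_perturbation} with the averaged kernel estimate of Proposition~\ref{prop:mixing_to_kernel_perturbation}. I will first specialise Theorem~\ref{thm:poisson_perturbation} to \(\mathcal{K}_{\tau}=\mathcal{K}_{\ell,\tau}^{\mathrm{cont}}\) and \(\mathcal{K}_{\star}=\mathcal{K}_{\ell,\tau}^{\mathrm{reset}}\). Its standing assumptions are part of the corollary's hypotheses: \(\mathcal{K}_{\ell,\tau}^{\mathrm{reset}}\) is ergodic, the centered correction admits a bounded Poisson solution \(g_{\star}\), and \(\mu_{\tau}\) is invariant for the continued-chain kernel. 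Inequality~\eqref{eq:poisson_tv_bound} then gives
\[
|B_{\tau}|
\leq
\mathrm{osc}(g_{\star})
\int
\bigl\|
\mathcal{K}_{\ell,\tau}^{\mathrm{cont}}(z,\cdot)
-
\mathcal{K}_{\ell,\tau}^{\mathrm{reset}}(z,\cdot)
\bigr\|_{\mathrm{TV}}
\,\mu_{\tau}(dz).
\]

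Under Hypothesis~\ref{hyp:lower_level_forgetting}, the integral on the right is bounded by \(A_{\ell}\rho_{\ell}^{\tau}\) by Proposition~\ref{prop:mixing_to_kernel_perturbation}. Multiplying the two estimates gives~\eqref{eq:geometric_poisson_bias}.

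The step that needs care is checking that the total-variation normalisation is the same in both results. The dual characterisation behind~\eqref{eq:poisson_tv_bound} reads
\[
|\nu(g)-\nu'(g)|\leq \mathrm{osc}(g)\,\|\nu-\nu'\|_{\mathrm{TV}}
\]
when \(\|\cdot\|_{\mathrm{TV}}=\sup_A|\nu(A)-\nu'(A)|\). Hypothesis~\ref{hyp:lower_level_forgetting} and Proposition~\ref{prop:mixing_to_kernel_perturbation} must use this same convention. The contraction argument in Proposition~\ref{prop:mixing_to_kernel_perturbation} holds under either convention, so consistency only requires reading all three statements with the sup-over-sets norm. With the other convention, a harmless factor \(1/2\) would be absorbed into \(A_{\ell}\).

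I also need \(\mu_{\tau}\) to be the same measure in all three places. It is: the stationary law of the practical augmented chain is the measure in the Poisson identity, in the hypothesis, and in the proposition. No further obstacle is expected.
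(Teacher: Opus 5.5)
Your proposal is correct and is the argument the paper intends (it gives no separate proof): you specialise the total-variation bound \eqref{eq:poisson_tv_bound} of Theorem~\ref{thm:poisson_perturbation} to the continued and reset kernels, then bound the averaged kernel distance by \eqref{eq:mixing_to_kernel_bound} from Proposition~\ref{prop:mixing_to_kernel_perturbation}. Your normalisation check is a reasonable precaution, but nothing actually has to be absorbed into \(A_{\ell}\): under the full-variation convention the oscillation estimate acquires a factor \(1/2\), which only strengthens the bound, so the stated inequality holds either way.
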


\begin{proposition}[Bound-based subsampling rate]
\label{prop:bias_admissible_rate}
Let \(\varepsilon_{\ell}^{\mathrm{bias}}>0\) be the part of the level-\(\ell\)
error budget reserved for the finite-subsampling perturbation. Suppose that a
computable quantity \(b_{\ell}(\tau)\) satisfies
\[
    |B_{\tau}| \leq b_{\ell}(\tau)
\]
for the correction observable \(Y_{\ell}\). If the subsampling rate satisfies
\begin{equation}
    b_{\ell}(\tau_{\ell}) \leq \varepsilon_{\ell}^{\mathrm{bias}},
\label{eq:bias_admissibility_condition}
\end{equation}
then the finite-subsampling perturbation of the expected level correction is
controlled within the prescribed error budget. In particular, under
Corollary~\ref{cor:geometric_poisson_bias}, it is sufficient to choose
\begin{equation}
    \tau_{\ell}
    \geq
    \max\left\{
    1,
    \left\lceil
    \frac{
    \log\left(A_{\ell}\mathrm{osc}(g_{\star})/
    \varepsilon_{\ell}^{\mathrm{bias}}\right)
    }{-\log\rho_{\ell}}
    \right\rceil
    \right\}.
\label{eq:analytic_bias_rate}
\end{equation}
\end{proposition}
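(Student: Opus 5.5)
The plan has two parts: a direct chaining of inequalities, then a logarithmic inversion.

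\emph{First claim.} If $|B_\tau|\le b_\ell(\tau)$ for every $\tau$, and $\tau_\ell$ satisfies \eqref{eq:bias_admissibility_condition}, then evaluating at $\tau=\tau_\ell$ gives $|B_{\tau_\ell}|\le b_\ell(\tau_\ell)\le\varepsilon_\ell^{\mathrm{bias}}$. Here $B_{\tau}$ is defined in Subsection~\ref{sec:poisson_bias_diagnostic} with $f=Y_\ell$, so this is exactly the statement that the finite-subsampling perturbation of the expected level correction lies within the reserved budget. No further structure is needed at this step.

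\emph{Second claim.} Take $b_\ell(\tau)=A_\ell\,\mathrm{osc}(g_\star)\,\rho_\ell^{\tau}$. By Corollary~\ref{cor:geometric_poisson_bias}, which itself rests on Theorem~\ref{thm:poisson_perturbation} and Proposition~\ref{prop:mixing_to_kernel_perturbation}, this is a valid bound $|B_\tau|\le b_\ell(\tau)$. The condition $b_\ell(\tau)\le\varepsilon_\ell^{\mathrm{bias}}$ rearranges to $\rho_\ell^{\tau}\le \varepsilon_\ell^{\mathrm{bias}}/(A_\ell\,\mathrm{osc}(g_\star))$. Taking logarithms and dividing by $\log\rho_\ell<0$, which reverses the inequality, gives
\[
\tau\ge \log\bigl(A_\ell\,\mathrm{osc}(g_\star)/\varepsilon_\ell^{\mathrm{bias}}\bigr)/(-\log\rho_\ell).
\]
Since $\tau\mapsto\rho_\ell^\tau$ is decreasing, any integer $\tau_\ell$ at least the ceiling of this quantity also satisfies the condition. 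Together with $\tau_\ell\ge1$, this gives \eqref{eq:analytic_bias_rate}.

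\emph{Degenerate cases.} Two cases need separate treatment, and I would state them explicitly.
\begin{itemize}
\item If $\mathrm{osc}(g_\star)=0$ or $A_\ell=0$, the logarithm is undefined. In that case $B_\tau=0$ by the corollary, so every $\tau\ge1$ is admissible and the bound should be read with the $\max\{1,\cdot\}$ alone.
\item If $A_\ell\,\mathrm{osc}(g_\star)\le\varepsilon_\ell^{\mathrm{bias}}$, the logarithm is nonpositive and the max returns $1$. This is consistent, because $\rho_\ell^{1}<1$ already suffices.
\end{itemize}

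\emph{Main subtlety.} The step needing the most care is that the reference kernel $\mathcal{K}_{\ell,\tau}^{\mathrm{reset}}$, and hence $g_\star$, depend on $\tau$. The bound \eqref{eq:analytic_bias_rate} is therefore implicit unless $\mathrm{osc}(g_\star)$ is bounded uniformly in $\tau$. I would resolve this in one of two ways:
\begin{itemize}
\item interpret $\mathrm{osc}(g_\star)$ as a uniform upper bound $\sup_\tau \mathrm{osc}(g_{\star,\tau})<\infty$, assumed alongside the hypotheses of Theorem~\ref{thm:poisson_perturbation}; or
\item note that the sufficiency claim holds verbatim at the chosen $\tau_\ell$, because the corollary is applied only at that value.
\end{itemize}
Either reading makes the argument complete. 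With this caveat addressed, the rest is routine.
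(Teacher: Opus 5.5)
Your proposal is correct and follows the paper's own argument. The first claim is the direct chaining $|B_{\tau_\ell}|\le b_\ell(\tau_\ell)\le\varepsilon_\ell^{\mathrm{bias}}$, and the second comes from requiring $A_\ell\,\mathrm{osc}(g_\star)\rho_\ell^{\tau}\le\varepsilon_\ell^{\mathrm{bias}}$ and solving for $\tau$ using $\log\rho_\ell<0$. Your extra points are valid refinements that the paper's two-line proof does not mention: the degenerate cases, and the fact that $g_\star$ depends on $\tau$ through $\mathcal{K}_{\ell,\tau}^{\mathrm{reset}}$, so the rate condition is implicit unless $\mathrm{osc}(g_\star)$ is bounded uniformly in $\tau$.
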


\begin{proof}
The first statement follows directly from the assumed bound on \(B_{\tau}\). The
second statement is obtained by requiring the right-hand side of
Equation~\eqref{eq:geometric_poisson_bias} to be no larger than
\(\varepsilon_{\ell}^{\mathrm{bias}}\) and solving for \(\tau_{\ell}\).
\end{proof}

The integrated total-variation bound~\eqref{eq:poisson_tv_bound} may still be
conservative because it ignores signed cancellations. For the
numerical study we therefore also compute a projected signed diagnostic based
on~\eqref{eq:signed_poisson_identity}. Let \(s\) denote a correction-state bin
and \(c\) a coarse-state bin. From a reference run we estimate the binned
transition matrix \(\widehat{\mathcal{K}}_{\star}\), its invariant distribution
\(\widehat{\mu}_{\star}\), and the binned reward \(f\), and solve
\[
\widehat g_{\star}
-
\widehat{\mathcal{K}}_{\star}\widehat g_{\star}
=
f-\widehat{\mu}_{\star}(f).
\]
For each subsampling rate \(\tau\), we estimate the conditional next-step
kernels for the binned pair \((s,c)\), denoted by
\(\widehat{\mathcal{K}}_{\tau}(s,c,\cdot)\) and
\(\widehat{\mathcal{K}}_{\star}(s,c,\cdot)\), and the empirical pair
distribution \(\widehat{\eta}_{\tau}(s,c)\). The plug-in signed Poisson estimate
is
\begin{equation}
\widehat L_{\tau}
    =
    \sum_{s,c}
    \widehat{\eta}_{\tau}(s,c)
    \left(
        \widehat{\mathcal{K}}_{\tau}(s,c,\cdot)
        -
        \widehat{\mathcal{K}}_{\star}(s,c,\cdot)
    \right)
    \widehat g_{\star}.
\label{eq:signed_poisson_estimator}
\end{equation}
Placing absolute values around the summands in
\eqref{eq:signed_poisson_estimator} gives a more conservative projected
perturbation measure. We report the signed form because it estimates the
first-order contribution to the reference-relative correction bias. In the sense of
Proposition~\ref{prop:bias_admissible_rate}, let \(\widehat b_{\ell}(\tau)\)
denote either \(|\widehat L_{\tau}|\), a conservative absolute version of
\eqref{eq:signed_poisson_estimator}, or a fitted model \(c\rho^\tau\). The
empirical diagnostic rate is then
\begin{equation}
\tau_{\ell}^{\mathrm{diag}}
=
\min\left\{m\in\mathbb{N}:
\widehat b_{\ell}(m) \leq \varepsilon_{\ell}^{\mathrm{bias}}\right\}.
\label{eq:empirical_bias_rate}
\end{equation}
This rate is used to screen the cost-based choice for finite-subsampling
effects that appear larger than the intended numerical error scale in the
chosen projection.

\begin{figure}[H]
\centering
\begin{minipage}[t]{0.48\textwidth}
\centering
\includegraphics[width=\textwidth]{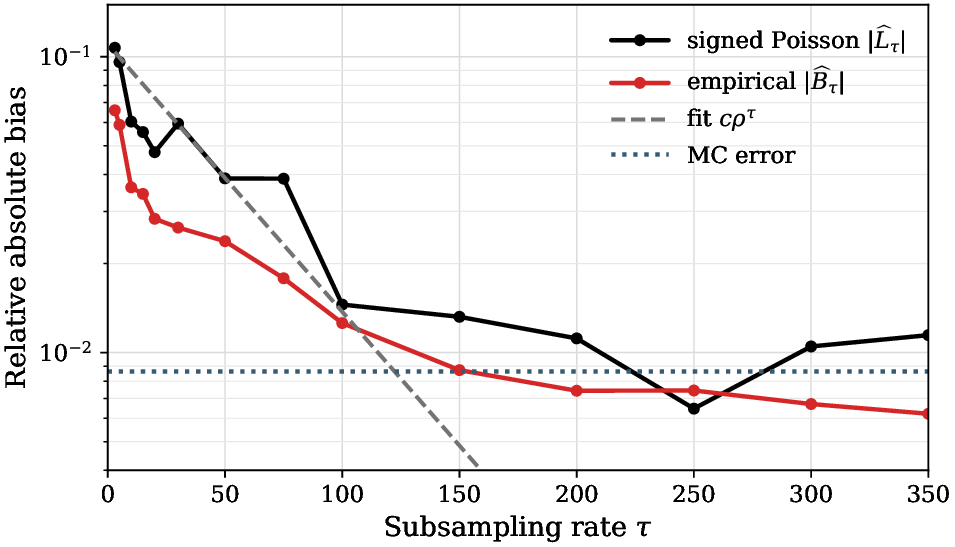}\\
Elasticity beam
\end{minipage}\hfill
\begin{minipage}[t]{0.48\textwidth}
\centering
\includegraphics[width=\textwidth]{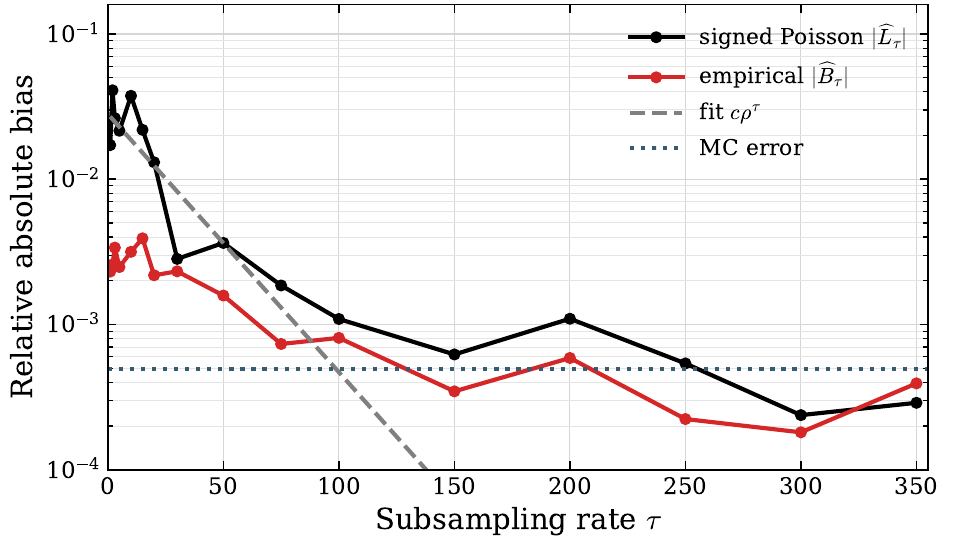}\\
Darcy flow
\end{minipage}
\caption{Empirical signed-Poisson diagnostic for the level-one correction \(Y_1\). In both panels, the solid black curve shows the projected signed quantity \(|\widehat L_{\tau}|\), the solid coloured curve shows the empirical absolute difference relative to the largest available finite-\(\tau\) reference run, and the horizontal line is the Monte Carlo error estimated from the same subset of seeds. Left: elasticity beam benchmark, with exponential fit \(c=1.90\times10^{-3}\), \(\rho=0.979\). Right: Darcy-flow benchmark, with exponential fit \(c=2.81\times10^{-2}\), \(\rho=0.960\).}
\label{fig:poisson_bias_diagnostic}
\end{figure}

The left panel of Figure~\ref{fig:poisson_bias_diagnostic} shows the diagnostic
for the level-one correction in the linear-elasticity test case. The reference
run uses the largest available subsampling rate, and the curve is computed from
the same subset of seeds used in the empirical bias study. Over the relevant
range, the signed Poisson estimate is well described by
\begin{equation}
|\widehat L_{\tau}| \approx c\rho^{\tau}.
\label{eq:poisson_exponential_fit}
\end{equation}
Fitting~\eqref{eq:poisson_exponential_fit} using the diagnostic values at
\(\tau=3,5,50\) gives \(c=1.90\times10^{-3}\) and \(\rho=0.979\),
which gives an inexpensive extrapolation of the finite-subsampling effect. The
right panel gives the corresponding diagnostic for the supporting Darcy-flow
experiment.

For the rates selected by the adaptive procedure in the beam problem,
\(\tau\approx 9\)--\(10\), the diagnostic is evaluated as an empirical
error-budget check. At \(\tau=10\), the signed Poisson diagnostic is
\(1.10\times10^{-3}\) and the paired empirical signed bias is
\(7.21\times10^{-4}\). Both are below the finest-level discretization proxy
\(1.85\times10^{-3}\), computed as the absolute mean of the \(Y_3\) correction
estimates from the four-level beam reference runs at \(\tau=400\), averaged
over the common seeds after discarding burn-in, but they are not below the paired Monte Carlo error
\(1.07\times10^{-4}\) for the level-one correction alone. Thus the
finite-subsampling effect is visible in the beam experiment, but remains below
the dominant discretization scale used in the reported multilevel error budget.
The Darcy-flow diagnostic gives the same qualitative message: the reduced
subsampling rates introduce a detectable finite-subsampling effect, but its
size remains small compared with the numerical error scale used for the
supporting experiment.

The signed Poisson diagnostic should be interpreted as an empirical screening
tool rather than as a general a posteriori certification of finite-subsampling
bias. Its value depends on practical choices made in the projected calculation,
including the selected correction-state and coarse-state bins, the reference run
used to estimate \(\widehat K_{\star}\) and \(\widehat\mu_{\star}\), and the
available Monte Carlo sample size in each bin. These choices do not invalidate
the diagnostic, but they mean that it measures the perturbation visible through
the chosen finite-dimensional projection, not the full augmented-chain bias in a
model-independent sense. We therefore use it only as a transparent empirical
error-budget check for the reported experiments.

\section{An efficient subsampling rate}
\label{sec:subsamp_derivation}
Once independence is no longer required, the subsampling rate becomes a design
parameter that governs the trade-off between autocorrelation and computational
cost across levels. The objective is thus not to approximate independence, but
to balance multilevel error contributions in a computationally efficient manner.

Accordingly, this section focuses on identifying a subsampling rate that
approximately improves the overall efficiency of the MLMCMC algorithm under the
estimated cost and autocorrelation model. Subsection~\ref{deriving} derives and
justifies an expression for such an efficient subsampling rate, while
Subsection~\ref{adaptive alg} presents an adaptive implementation of the
proposed strategy.

\subsection{Deriving an efficient rate}
\label{deriving}
The sampling error of the multilevel MCMC algorithm is given by the sum of the sampling errors of the quantity of interest and its corrections at all levels, i.e.

\begin{equation}
\varepsilon^2 = \sum_{\ell=0}^L \varepsilon^2_{\ell} = \sum_{\ell=0}^L \dfrac{s_{\ell}^2}{N_{\ell}^{\text{eff}}}
\label{eq:ml_samperror}
\end{equation}

where $s_{\ell}^2$ is the sample variance of the estimator of $Y_{\ell}$, with $Y_0=Q_{0}$.

We will first look at the two-level case. As the subsampling rate $\tau$ increases more computation time is spent on the coarse level and less on the fine level. As a result, for a fixed computational budget the coarse term $s_0^2/N_0^{\text{eff}}$ will decrease due to the increased effective sample size. The trade-off is that the fine term $s_1^2/N_1^{\text{eff}}$ will be based on fewer samples. While these samples will be more decorrelated, as shown in the previous section, this decorrelation does not sufficiently compensate for the smaller sample size, leading to an increase in the sampling error $s_1^2/N_1^{\text{eff}}$.

We illustrate this trade-off in the left plot of Figure~\ref{fig:error_terms} for the same two-level experiment as introduced in the previous section. This time, we run the MLMCMC algorithm for a fixed computational budget of $10^4$ seconds of wall-clock time per seed run, again using 7 independent seed runs executed in parallel. The figure shows the contributions of the level sampling errors $\varepsilon_0$ and $\varepsilon_1$.

All numerical experiments were performed on the batch partition of the
KU Leuven/UHasselt Tier-2 wICE cluster, using nodes equipped with Intel Xeon
Platinum 8360Y processors at 2.4 GHz and 256 GiB RAM per node~\cite{wicehardware}.
The same hardware configuration was used throughout the paper.

\begin{figure}
\centering
\includegraphics[width=\textwidth]{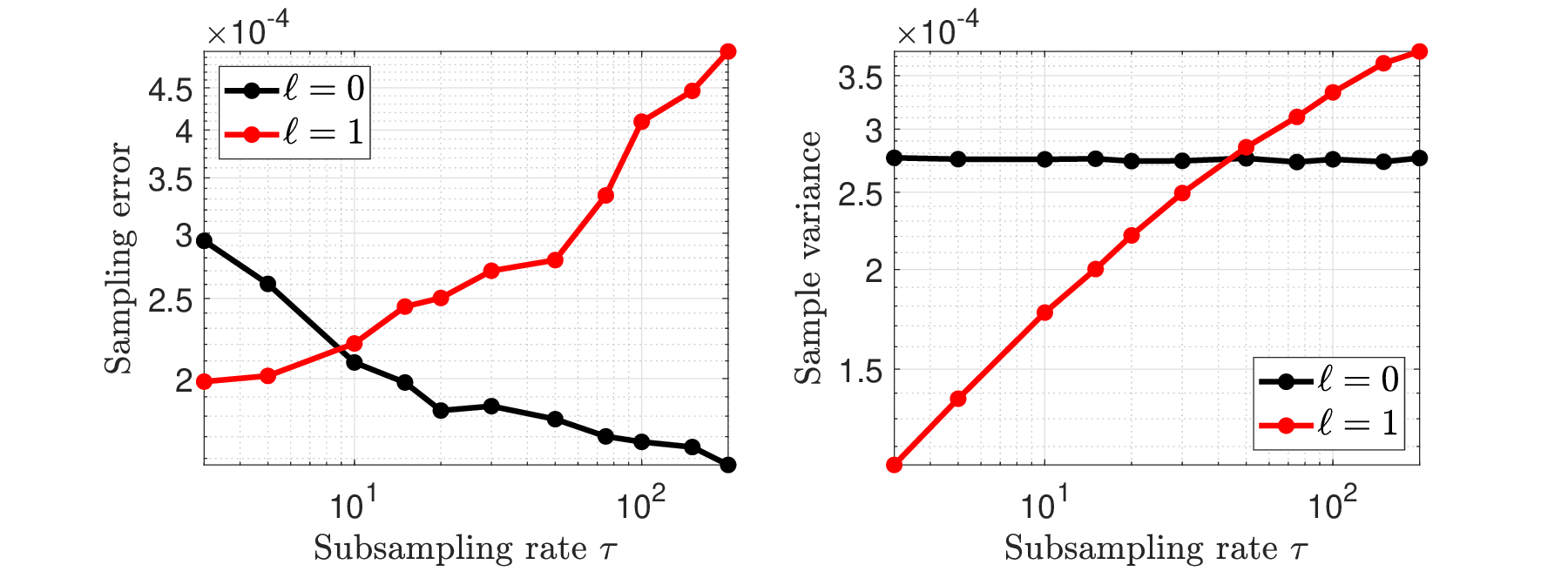}
\caption{Left: sampling errors $\varepsilon_{\ell}$ of coarse and fine estimates $\widehat{Y}^{\text{MC}}_{\ell}$ after $10^4$ seconds of wall-clock time per seed run for a varying subsampling rate $\tau$. Right: sample variance for varying subsampling rates.}
\label{fig:error_terms}
\end{figure}

From Figure~\ref{fig:error_terms}, two distinct regimes can be identified which are separated around \(\tau \approx 10\) for this particular experiment.  
At small subsampling rates, the total sampling error is dominated by the coarse-level contribution, with \(\varepsilon_0 > \varepsilon_1\). This occurs because a larger fraction of the computational budget is devoted to the fine level when \(\tau\) is small.  
In contrast, for larger values of \(\tau\), the situation reverses: the fine-level error dominates, with \(\varepsilon_1 > \varepsilon_0\). In this regime, the high subsampling rate reduces the number of fine-level samples more rapidly than the corresponding gain in decorrelation, leading to an overall increase in the fine-level sampling error.

Additionally, we see that as $\tau$ increases the fine-level sample variance $s_1^2$ increases as well. This is shown in the right plot of Figure~\ref{fig:error_terms} and can be intuitively explained as follows: when successive rejections occur at the fine level, the coarse and fine chains decouple and are only recoupled upon the next acceptance \cite{dodwell_hierarchical_2015}. The correction term is then based on less strongly correlated samples and may increase. Naturally, for large $\tau$ the coarse chain evolves for a longer time until the next acceptance and the decoupling is stronger. The coarse sample variance $s_0^2$ is independent of $\tau$, thus this effect only influences $\varepsilon_1$.

In both regimes, it can be argued that the MLMCMC method performs a certain degree of redundant calculations when trying to reach a prescribed threshold for the sampling error. This redundancy is sketched in Figure~\ref{fig:subsamp_sketch}. In the regime of low $\tau$, at a certain point the surplus error is entirely explained by the larger value of $\varepsilon_0$ over $\varepsilon_1$. At this point it would thus be more efficient to continue evolving only the coarse chain and stop the subsampling procedure to  generate fine samples. This approach decreases $\varepsilon$ for less computational cost than simply continuing the full algorithm which spends part of its budget in decreasing the already sufficiently small $\varepsilon_1$.

In the high $\tau$ regime, surplus sampling error is explained by a large $\varepsilon_1$. If we wish to decrease this, we thus require more fine samples. However, due to the high subsampling rate these samples are slow to generate and the method spends a great deal of time on the coarse level, generating long subchains to use as proposal moves. These many coarse samples provide little extra benefit, as they can only be used to decrease $\varepsilon_0$, which is already sufficiently small.

Our proposal for the choice of subsampling rate $\tau$ is then the value which
aims to reduce redundancy by making the chains reach their target thresholds
for the sampling error at approximately the same time. To determine this value
of $\tau$, we use a classical result from MLMC sampling
\cite{cliffe_multilevel_2011,giles_multilevel_2008}: we first assume that we
can sample independently from the distributions of \(Y_{\ell}\) for all
\(\ell=0,\ldots,L\) with a cost per sample of \(\mathcal{C}_{\ell}\) at all
levels. Under this allocation model, the cost to achieve a total sampling
error, as defined in Equation~\eqref{eq:ml_samperror}, is minimised if the
sample sizes \(N_{\ell}\) at each level satisfy

\begin{equation}
N_{\ell} = \dfrac{2}{\varepsilon^2}\left(\sum_{k=0}^L \sqrt{s_{k}^2\mathcal{C}_{k}}\right)\sqrt{\dfrac{s_{\ell}^2}{\mathcal{C}_{\ell}}}.
\label{eq:optimal_n}
\end{equation}

In the case of correlated samples obtained from e.g. MCMC sampling, this
expression no longer holds for the sample size \(N_{\ell}\) itself, but rather
the effective sample size \(N_{\ell}^{\mathrm{eff}}\). Similarly, the cost per
sample \(\mathcal{C}_{\ell}\) should be replaced by the cost per effective
sample \(\mathcal{C}_{\ell}^{\mathrm{eff}}\), yielding

\begin{equation}
N_{\ell}^{\text{eff}} = \dfrac{2}{\varepsilon^2}\left(\sum_{k=0}^L \sqrt{s_{k}^2\mathcal{C}_{k}^{\text{eff}}}\right)\sqrt{\dfrac{s_{\ell}^2}{\mathcal{C}_{\ell}^{\text{eff}}}}.
\end{equation}

\begin{figure}
\centering
\begin{tikzpicture}[scale=0.7]
\draw[ultra thick] (0,6) -- (6,6);
\draw[ultra thick, gray] (6,6) -- (11,6);
\draw[ultra thick] (0,7) -- (11,7);
\draw[ultra thick] (0,3) -- (11,3);
\draw[ultra thick] (0,1) -- (9,1);
\draw[ultra thick] (0,0) -- (9,0);
\draw[ultra thick] (0,4) -- (0.5,4);
\draw[ultra thick] (1.5,4) -- (2.5,4);
\draw[ultra thick] (3.5,4) -- (4.5,4);
\draw[ultra thick] (5.5,4) -- (6,4);
\draw[ultra thick, gray] (6,4) -- (6.5,4);
\draw[ultra thick, gray] (7.5,4) -- (8.5,4);
\draw[ultra thick, gray] (9.5,4) -- (10.5,4);
\draw[ultra thick, dashed] (0.5,4) -- (1.5,4);
\draw[ultra thick, dashed] (2.5,4) -- (3.5,4);
\draw[ultra thick, dashed] (4.5,4) -- (5.5,4);
\draw[ultra thick, dashed, gray] (6.5,4) -- (7.5,4);
\draw[ultra thick, dashed, gray] (8.5,4) -- (9.5,4);
\draw[ultra thick, dashed, gray] (10.5,4) -- (11,4);
\foreach \x in {0,...,6}
{
\filldraw[black] (\x,6) circle (4pt);
\draw[ultra thick] (\x,6) -- (\x,7);
}
\foreach \x in {7,...,11}
{
\filldraw[gray] (\x,6) circle (4pt);
\draw[ultra thick, gray] (\x,6) -- (\x,7);
}
\foreach \x in {0,4,8,12,16,20}{
\draw[ultra thick] (\x/2,4) -- (\x/2,3);
\filldraw[black] (\x/2,3) circle (4pt);
}
\foreach \x in {0,...,6}
{
\filldraw[black] (1.5*\x,0) circle (4pt);
\draw[ultra thick] (1.5*\x,0) -- (1.5*\x,1);
}
\foreach \x in {0,...,12}
{
\filldraw[black] (\x/2,7) circle (4pt);
\filldraw[black] (\x/2,1) circle (4pt);
}
\foreach \x in {13,...,18}
{
\filldraw[black] (\x/2,7) circle (4pt);
\filldraw[black] (\x/2,1) circle (4pt);
}
\foreach \x in {19,...,22}
{
\filldraw[black] (\x/2,7) circle (4pt);
}
\foreach \x in {0,1,3,4,5,7,8,9,11,12}
{
\filldraw[black] (\x/2,4) circle (4pt);
}
\foreach \x in {13,15,16,17,19,20,21}
{
\filldraw[gray] (\x/2,4) circle (4pt);
}
\foreach \x in {1,2,3}{
\foreach \y in {0,1,3,4,6,7}{
\filldraw[black] (-0.1-\x/10,\y) circle (0.5pt);
}
\filldraw[black] (11.1+\x/10,7) circle (0.5pt);
\filldraw[gray] (11.1+\x/10,6) circle (0.5pt);
\filldraw[gray] (11.1+\x/10,4) circle (0.5pt);
\filldraw[black] (11.1+\x/10,3) circle (0.5pt);
}
\node[] at (8.5,-0.3) {$\ell=1$};
\node[] at (10.5,3-0.3) {$\ell=1$};
\node[] at (10.5,6-0.3) {$\ell=1$};
\node[] at (8.5,1+0.4) {$\ell=0$};
\node[] at (10.5,4+0.4) {$\ell=0$};
\node[] at (10.5,7+0.4) {$\ell=0$};
\node[anchor=west] at (0,7+0.5) {\large Small $\tau$};
\node[anchor=west] at (0,4+0.5) {\large Large $\tau$};
\node[anchor=west] at (0,1+0.5) {\large Proposed $\tau$};
\end{tikzpicture}
\caption{Illustration of redundant samples for various choices of subsampling rate $\tau$. In each case, samples at level $\ell$ which are redundant for reaching a target threshold of $\varepsilon_{\ell}$ are shown in gray.}
\label{fig:subsamp_sketch}
\end{figure}

In the specific construction of the MLMCMC algorithm, one final refinement must be made to the expression for the cost per effective sample.  
For levels \(\ell > 0\), the total cost in Equation~\eqref{eq:effective_cost} includes the cost of generating a subchain of length \(\tau_{\ell}\) at level \(\ell-1\).  
However, the chain at level \(\ell-1\) has already been generated to a length of \(N_{\ell-1}^{\mathrm{eff}}\) effective samples.  
Provided that this coarse chain is sufficiently long, no additional computation is required on the coarse level to produce fine samples.  
Consequently, directly using \(\mathcal{C}_{\ell}^{\mathrm{eff}}\) in the total cost expression would double-count the lower-level work already accounted for in the coarser allocations.  
To avoid this, we define the additional cost per effective sample at level~\(\ell\), denoted \(\mathcal{C}_{\ell}^{\mathrm{add}}\), as  

\begin{equation}
\begin{aligned}
\mathcal{C}_{\ell}^{\text{add}}
&=
\mathcal{C}_{\ell}^{\text{eff}}
-
\left\lceil T_{\ell} \right\rceil
\tau_{\ell}\mathcal{C}_{\ell-1}^{\text{tot}}
\\
&= \left\lceil T_{\ell} \right\rceil \mathcal{C}_{\ell},
\end{aligned}
\end{equation}

with \(\mathcal{C}^{\text{add}}_0 = \mathcal{C}^{\text{eff}}_0\). The
corresponding effective sample sizes at all levels are then given by

\begin{equation}
N_{\ell}^{\text{eff}} = \dfrac{2}{\varepsilon^2}\left(\sum_{k=0}^L \sqrt{s_{k}^2\mathcal{C}_{k}^{\text{add}}}\right)\sqrt{\dfrac{s_{\ell}^2}{\mathcal{C}_{\ell}^{\text{add}}}}.
\label{eq:neff_optimal}
\end{equation}

Our proposed strategy is the following: rather than first specifying the
subsampling rates \(\tau_{\ell}\), computing the corresponding values of
\(N_{\ell}^{\mathrm{eff}}\) and then running the algorithm until each level
reaches its target effective sample size, we first calculate
\(N_{\ell}^{\mathrm{eff}}\) and choose the subsampling rates \(\tau_{\ell}\) so
that, under the estimated cost and autocorrelation assumptions, the chains are
expected to reach their target sample sizes at approximately the same time.
Since \(N_{\ell}^{\mathrm{eff}}\) denotes an effective sample size, the
corresponding target chain length is approximately
\(N_{\ell}=T_{\ell}N_{\ell}^{\mathrm{eff}}\), with integer rounding applied
afterwards. The unconstrained cost-based subsampling rate is then the ratio
between the target chain lengths

\begin{equation}
\tau_{\ell}^{\mathrm{cost}} = \left\lceil \dfrac{N_{\ell-1}}{N_{\ell}} \right\rceil = \left\lceil\dfrac{s_{\ell-1}T_{\ell-1}}{s_{\ell}T_{\ell}}\sqrt{\dfrac{\mathcal{C}_{\ell}^{\text{add}}}{\mathcal{C}_{\ell-1}^{\text{add}}}} \right\rceil.
\label{eq:cost_subsamp}
\end{equation}

This formulation is designed to satisfy the earlier assumption that the level
\(\ell-1\) chain is sufficiently long, so that no additional computations are
required at that level to generate fine samples beyond the rounded sample-count
adjustment below. When the perturbation diagnostic from
Subsection~\ref{sec:poisson_bias_diagnostic} is available, we combine this
cost-based rate with the empirical diagnostic condition in
Equation~\eqref{eq:empirical_bias_rate} by setting
\begin{equation}
\tau_{\ell} = \max\left\{\tau_{\ell}^{\mathrm{cost}},\tau_{\ell}^{\mathrm{diag}}\right\}.
\label{eq:proposed_subsamp}
\end{equation}
If no separate perturbation estimate is used during the adaptive run, one can
set \(\tau_{\ell}^{\mathrm{diag}}=1\) and perform the same check after the rate
has stabilized. In the numerical experiments below, the cost-based rate selected
by the adaptive procedure already satisfies the reported empirical check, so
the additional diagnostic does not change the observed values of \(\tau_{\ell}\).

To account for the fact that $\tau_{\ell}$ must be an integer, we round up to the nearest integer. For full completeness, the used sample size $N_{\ell}^{\text{pract}}$ at level $\ell-1$ should then also be increased to the nearest multiple of $N_{\ell}^{\text{pract}}$, so

\begin{equation}
N_{\ell-1}^{\text{pract}} = \left\lceil \dfrac{N_{\ell-1}}{N_{\ell}} \right\rceil N_{\ell}^{\text{pract}},
\end{equation}

with $N_L^{\text{pract}} = N_L$. In most cases, especially for long chains, this constitutes a negligible amount of additional computation. The sample sizes at all levels can then be constructed from this rule in descending order of $\ell$.

\begin{remark}
It is worth noting that the analysis presented above determines the subsampling
rate based on Equation~\eqref{eq:optimal_n}, which is derived under the
standard MLMC assumption of independent \(Y_{\ell}\). In practice, weak
correlations between levels often exist
\cite{peherstorfer_optimal_2016,peherstorfer_survey_2018,robbe_recycling_2018}.
While these inter-level covariances can in principle be incorporated into the
optimal sample allocation, their computation is nontrivial and computationally
expensive. Moreover, they are frequently found to be negligible compared to
within-level covariances~\cite{cui_multilevel_2024}. Thus, we omit these
cross-level covariance terms, accepting a minor suboptimality in exchange for a
substantially simpler and more efficient implementation.
\end{remark}

\subsection{The proposed adaptive algorithm}
\label{adaptive alg}
The use of our proposed subsampling rate in Equation~\eqref{eq:proposed_subsamp}
gives a practical adaptive procedure for the original MLMCMC algorithm. The
sample standard deviations \(s_{\ell}\), IATs \(T_{\ell}\), and costs per sample
\(\mathcal{C}_{\ell}\) are estimated from the samples generated so far, and the
resulting values of \(\tau_{\ell}\) are updated repeatedly during the run. If a
perturbation diagnostic is available, the same update also checks the empirical
diagnostic condition in Equation~\eqref{eq:empirical_bias_rate};
otherwise this condition can be evaluated after the cost-based rate has
stabilized.

Since \(\tau_{\ell}\) is updated using quantities estimated from the evolving
chains, this implementation is adaptive. The exact reset-kernel argument in
Subsection~\ref{justification} should therefore be interpreted as a reference
calculation for the fixed-\(\tau\) kernels visited by the adaptive procedure,
rather than as a full adaptive-MCMC convergence proof. Establishing such a proof
would require additional conditions, such as diminishing adaptation and
containment~\cite{haario_2001,roberts_rosenthal_2007,andrieu_thoms_2008}. In
the experiments below, the adaptive updates stabilize rapidly and the final
values are insensitive to the initial subsampling rate.

Equivalently, the adaptive phase can be viewed as a pilot stage for estimating
the fixed subsampling rates. After the estimates have stabilized, the rates may
be frozen and the fixed-\(\tau\) perturbation analysis above applies directly
to the production part of the run. The experiments reported below keep the same
practical implementation as the original MLMCMC algorithm, but the rapid
stabilization of the rates supports this fixed-rate interpretation.

The resulting adaptive algorithm is summarised in
Algorithm~\ref{alg:adapt_mlmcmc}.

\begin{algorithm}
\caption{MLMCMC with adaptive efficient subsampling}
\begin{algorithmic}
\State Define levels \(\ell=0,\ldots,L\). Sample \(\theta_L^0\sim\pi(\theta_L)\). For each level \(\ell\), define \(\theta_\ell^0\) as the truncation of \(\theta_L^0\) after the first \(M_\ell\) modes.
\State Choose initial rates \(\tau_1^{\mathrm{init}},\ldots,\tau_L^{\mathrm{init}}\), burn-in lengths \(B_0,\ldots,B_L\), recalculation interval \(\zeta\), and perturbation tolerances \(\varepsilon_{\ell}^{\mathrm{bias}}\) when a bias diagnostic is used.
\State Initialize \(\tau_{\ell}\leftarrow\tau_{\ell}^{\mathrm{init}}\) for \(\ell=1,\ldots,L\).
\For{\(n_0=0,\ldots,N_0-1\)}
\State Generate \(\theta_0^{n_0+1}\) using one Metropolis--Hastings step with pCN proposal.
\For{\(\ell=1,\ldots,L\)}
\If{enough new samples have been generated on level \(\ell-1\) according to the current \(\tau_{\ell}\)}
\State Generate \(\theta_\ell^{n_\ell+1}\) using one multilevel proposal step from Algorithm~\ref{alg:mlmcmc}.
\EndIf
\EndFor
\If{\(n_0\equiv 0 \pmod{\zeta}\) and \(n_0>B_0\)}
\For{\(\ell=0,\ldots,L\)}
\State Estimate the sample standard deviation \(s_{\ell}\), IAT \(T_{\ell}\), and additional level cost \(\mathcal{C}_{\ell}^{\mathrm{add}}\) from the samples currently available for \(Y_{\ell}\).
\If{\(\ell\geq1\)}
\State Set the cost-based rate
\[
\tau_{\ell}^{\mathrm{cost}}
=
\left\lceil
\dfrac{s_{\ell-1}T_{\ell-1}}{s_{\ell}T_{\ell}}
\sqrt{\dfrac{\mathcal{C}_{\ell}^{\mathrm{add}}}{\mathcal{C}_{\ell-1}^{\mathrm{add}}}}
\right\rceil .
\]
\State Set \(\tau_{\ell}^{\mathrm{diag}}\) from Equation~\eqref{eq:empirical_bias_rate} if available, and otherwise set \(\tau_{\ell}^{\mathrm{diag}}=1\).
\State Update \(\tau_{\ell}\leftarrow\max\{\tau_{\ell}^{\mathrm{cost}},\tau_{\ell}^{\mathrm{diag}}\}\).
\EndIf
\EndFor
\EndIf
\EndFor
\State \Return Markov chains of samples \(\{\theta_{\ell}^{n_{\ell}}\}_{n_{\ell}=0}^{N_{\ell}}\) at all levels \(\ell=0,\ldots,L\).
\end{algorithmic}
\label{alg:adapt_mlmcmc}
\end{algorithm}

\section{Numerics}
\label{sec:subsamp_numerics}

\begin{figure}
\centering
\includegraphics[width=0.8\textwidth]{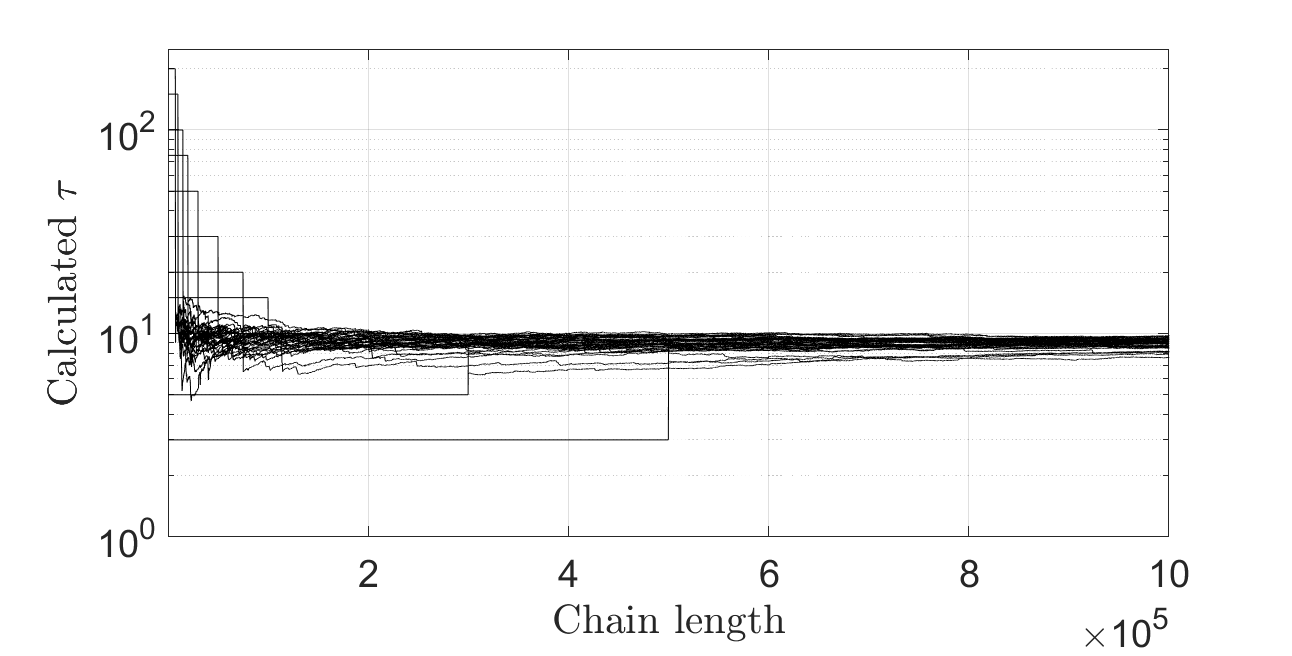}
\caption{Adaptive evolution of the estimate of the cost-based efficient subsampling rate from Equation~\eqref{eq:cost_subsamp} for increasing level-1 chain length. Evolution shown for all seven seeds and all 11 values of \(\tau^{\mathrm{init}}\).}
\label{fig:subsamp_estimate}
\end{figure}

We now illustrate the application of the proposed subsampling-rate selection
strategy in a numerical experiment for the elasticity beam problem. The setup
corresponds to the same model introduced in Section~\ref{ch:model setup}, and
in particular to the two-level test case used to obtain the numerical results
presented in Subsection~\ref{ch: effect subsampling}. The algorithm is executed
with 11 different initial subsampling rates, denoted \(\tau^{\mathrm{init}}\),
ranging from \(\tau^{\mathrm{init}} = 3\) to \(\tau^{\mathrm{init}} = 200\).
For each choice of \(\tau^{\mathrm{init}}\), the experiment is repeated using
7 independent random seeds to reduce stochastic variability.

The algorithm starts from the initial value of \(\tau\) and performs the first recalculation after \(1.5\times 10^6\) samples at level 0. The corresponding number of samples at level 1 is then \(1.5\times 10^6/\tau^{\mathrm{init}}\). After this point, the estimated value of \(\tau\) is updated repeatedly from the available estimates in Equation~\eqref{eq:cost_subsamp} and checked against the bias-budget condition in Equation~\eqref{eq:empirical_bias_rate}.

We first look at the adaptive estimate of \(\tau\). Figure~\ref{fig:subsamp_estimate} shows the evolution of the calculated value of \(\tau\) as the fine chain is constructed for all choices of seed and \(\tau^{\mathrm{init}}\). For visual clarity, the rounding in Equation~\eqref{eq:cost_subsamp} is not performed here.

We carry out the adaptive experiment for a total of \(10^6\) fine-level samples, including burn-in. We see in Figure~\ref{fig:subsamp_estimate} that immediately after the burn-in, the calculated values of \(\tau\) are already similar regardless of the chosen \(\tau^{\mathrm{init}}\). After \(10^6\) samples nearly all experiments converge to a rounded value of \(\tau = 9\) or \(10\), with no discernible dependence on the initial choice of \(\tau^{\mathrm{init}}\). The adaptive estimation thus provides a robust way of finding an efficient value of \(\tau\) regardless of the initial choice.

Based on this experiment, we therefore do not look for a globally optimal initial choice for the subsampling rate. The final estimate is insensitive to the initial value over the range tested here.

Next, we compare our Algorithm~\ref{alg:adapt_mlmcmc} to a run of standard MLMCMC with an IAT based subsampling rates to illustrate the computational gains from our algorithm. To this end, we now consider a 4-level experiment. The coarsest levels $\ell=0,1$ are defined in the same way as before. We additionally introduce two finer levels $\ell=2$ and $\ell=3$. The truncation numbers and grid resolutions of all levels are specified in Table~\ref{tab:fourlevel_values}.

\begin{table}[ht]
\centering
\begin{tabular}{r|cccc}
& $\ell = 0$ & $\ell = 1$ & $\ell = 2$ & $\ell = 3$ \\
\hline
Truncation \(M_{\ell}\) & 16 & 40 & 100 & 250 \\
Grid resolution (FEs) & $4\times 16$ & $8\times 32$ & $16\times 64$ & $32 \times 128$
\end{tabular}
\caption{Truncation numbers of the KL expansion and grid resolution in number of finite elements as height $\times$ width for all levels of the 4-level experiment.}
\label{tab:fourlevel_values}
\end{table}

We initialize our Algorithm~\ref{alg:adapt_mlmcmc} with subsampling rates $\tau_1^{\text{init}} = 100$ and $\tau_2^{\text{init}}=\tau_3^{\text{init}} = 5$. For the standard MLMCMC algorithm, the subsampling rates are initialized as $\tau_1 = 1000$ and $\tau_2=\tau_3 = 5$. After the burn-in phase, we calculate the IATs at all levels and replace the initial choices in the standard MLMCMC algorithm by the fixed values $\tau_{\ell} = \lceil T_{\ell-1} \rceil$. The IAT values are shown in Table~\ref{tab:iat_values}.

\begin{table}[ht]
\centering
\begin{tabular}{r|cccc}
& $\ell = 0$ & $\ell = 1$ & $\ell = 2$ & $\ell = 3$ \\
\hline
IAT & 1188 & 49.5 & 36.0 & 6.20
\end{tabular}
\caption{IAT values at all levels of the 4-level experiment using fixed subsampling rates.}
\label{tab:iat_values}
\end{table}

\begin{figure}
\centering
\includegraphics[width=\textwidth]{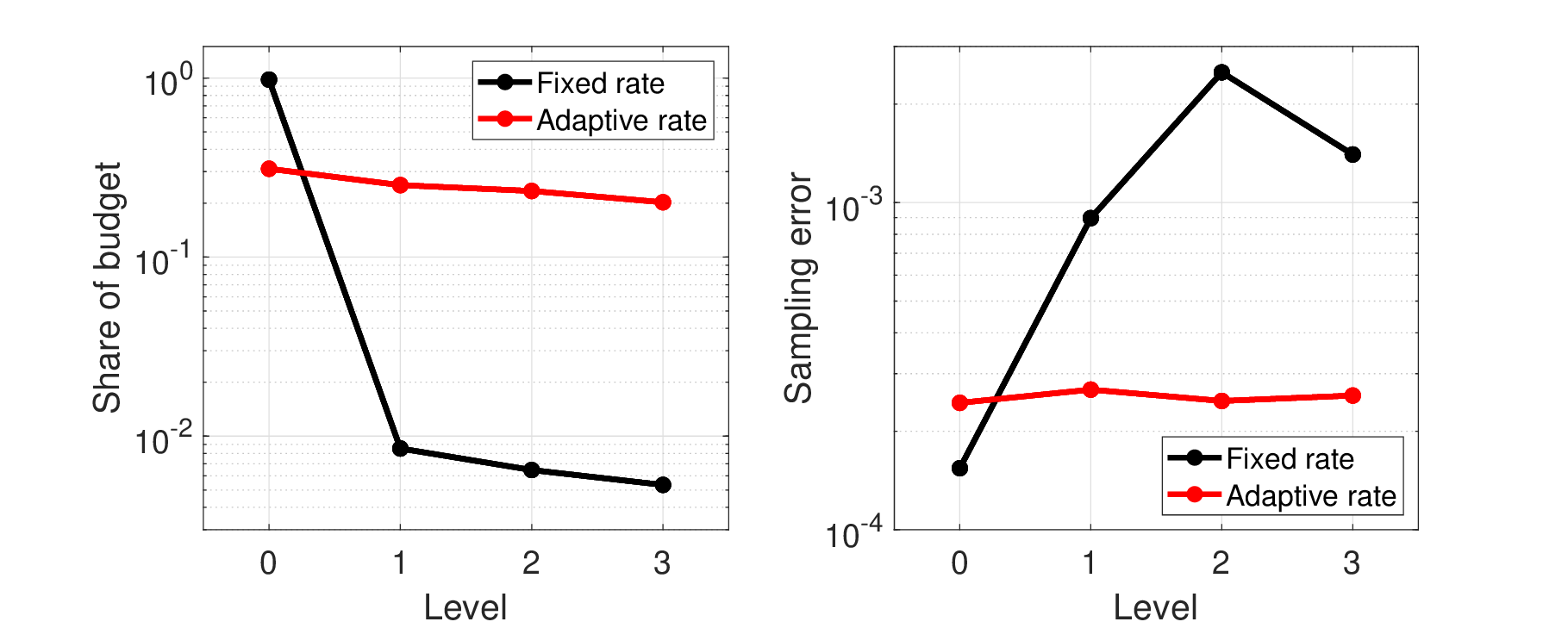}
\caption{Left: relative share of total computational budget spent at each level of the multilevel algorithm for fixed choice of $\tau$ and $\tau$ obtained from Algorithm~\ref{alg:adapt_mlmcmc}. Right: sampling error $\varepsilon$ of estimator at each level for both approaches with total runtime of $10^4$ seconds.}
\label{fig:budgeted_errs}
\end{figure}

The chains obtained in this experiment exhibit strong autocorrelation.
Note that, although the IAT decreases with increasing level $\ell$, it remains substantially greater than one.
When the subsampling rate is chosen as \(\tau_{\ell} = \lceil T_{\ell-1} \rceil\), the large values of \(T_1\) and \(T_2\) considerably amplify the overall computational cost, since generating a single finest-level sample requires a number of coarse samples equal to the product of all \(\tau_{\ell'+1}\) for \(\ell' < \ell\). With the values of \(T_{\ell}\) found here, each sample at the finest level \(\ell=3\) requires over \(2\cdot10^6\) samples at level 0; this cost is further multiplied by approximately 6 when expressed per effective finest-level sample.

The differing subsampling rates used by the two algorithms lead to different distributions of computational effort across the levels. This effect is illustrated in the left plot of Figure~\ref{fig:budgeted_errs}, which shows the relative proportion of the total computational budget allocated to each level of the multilevel algorithm for both approaches.

The subsampling rate defined in Equation~\eqref{eq:proposed_subsamp} yields a well-balanced distribution of computational resources across all levels of the MLMCMC algorithm. This contrasts sharply with the standard MLMCMC algorithm, which employs a fixed choice of \(\tau_{\ell} = \lceil T_{\ell-1} \rceil\). Owing to the strong autocorrelation at the coarse levels, this fixed-rate strategy causes the majority of the computational budget to be spent on the coarsest level, which accounts for approximately 99\% of the total runtime.

The differing allocations of computational effort between the two algorithms lead to pronounced differences in the level-wise sampling errors \(\varepsilon_{\ell}\). This is illustrated in the right plot of Figure~\ref{fig:budgeted_errs} where both forms of the algorithm were executed for a total of $10^4$ seconds to calculate the respective values of $\varepsilon_{\ell}$ after a fixed time. The adaptive subsampling rate from Equation~\eqref{eq:proposed_subsamp} results in very similar values at all levels. The fixed choice of $\tau_{\ell} = \lceil T_{\ell-1} \rceil$ spends most of its time at the coarsest level and as such manages to obtain a smaller value of $\varepsilon_0$. However, this is obtained to the detriment of the values of $\varepsilon_{\ell}$ for $\ell \geq 1$ which are considerably larger due to the lower sample sizes at these levels.

\begin{figure}[ht]
\centering
\includegraphics[width=0.75\textwidth]{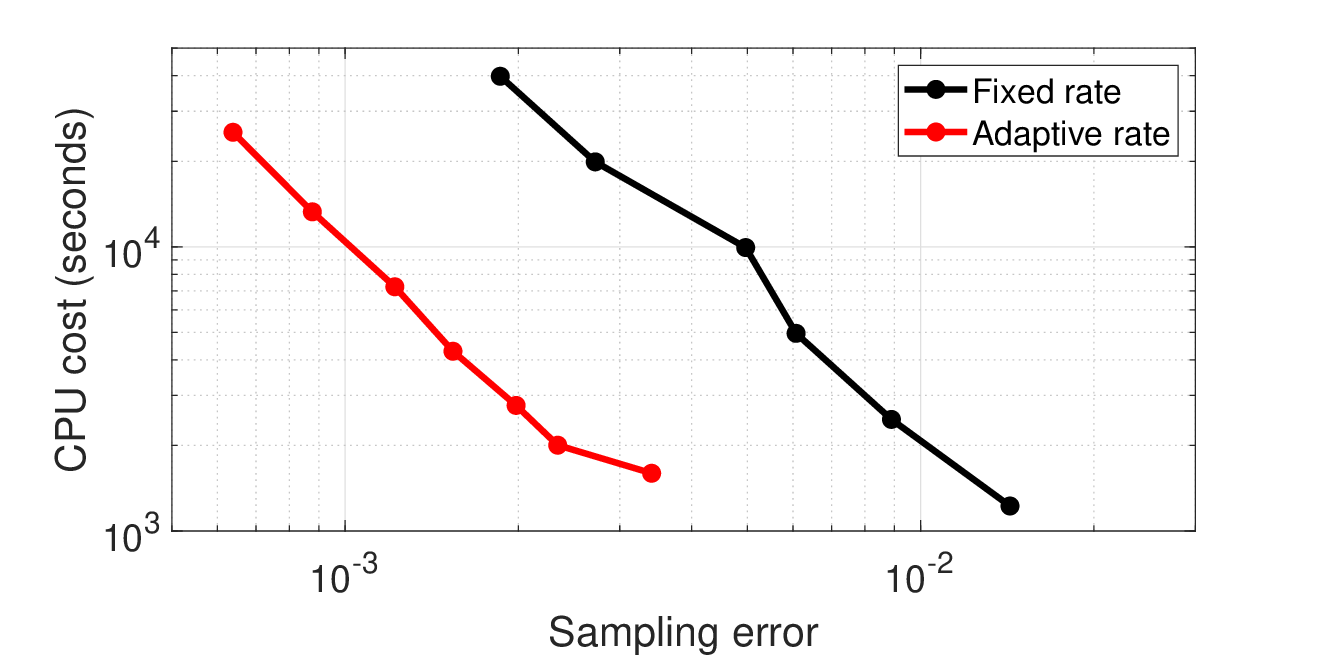}
\caption{CPU cost to achieve prescribed sampling error $\varepsilon$ for both subsampling approaches of the MLMCMC algorithm.}
\label{fig:error_cost}
\end{figure}

The increased errors at finer levels, $\ell \geq 1$, lead to a larger overall sampling error. This is shown in Figure~\ref{fig:error_cost}, where the total sampling error is plotted against the computational budget required.

Both methods follow the asymptotic convergence rate obtained by \cite{dodwell_hierarchical_2015}. However, the constant of proportionality is significantly different in these runs, with the proposed adaptive subsampling rate requiring approximately an order of magnitude less computation time over the tested error range. These numerical experiments support the conclusion that significant gains can be made from a judicious choice of subsampling rate in this highly autocorrelated benchmark. The benefits are most pronounced for highly autocorrelated chains, while for problems with lower autocorrelation the improvement is expected to be less dramatic.
The construction based on Equation~\eqref{eq:neff_optimal} should therefore be read as an approximate cost--error criterion under the estimated variance, IAT, and cost assumptions, not as a proof of global cost optimality.

\section{Conclusion}
\label{ch:conclusion}
The MLMCMC algorithm is widely used for Bayesian inverse problems, but its efficiency depends strongly on how often coarse-level states are passed to the next finer level. In the original IAT-based rule, the subsampling rate is chosen using the IAT as a proxy for proposal decorrelation. For highly autocorrelated coarse chains this can require many coarse updates for each fine-level proposal. The resulting coarse-level sampling error is then much smaller than the fine-level contributions, and a substantial part of the computation is spent reducing an error term that is no longer dominant.

This paper argues that the subsampling rate should instead be treated as a
cost--error allocation parameter. The theoretical discussion shows that, in an
ideal reset-reference setting, approximate independence is not essential for the
fine-level Metropolis--Hastings calculation: reversibility of the coarse
proposal with respect to the coarse posterior is sufficient. This observation
does not make the practical continued-chain implementation into a new exact
sampler. It only provides a useful reference against which the finite-subsampling
effect in the practical chain can be viewed as a perturbation.

The Poisson-equation identity gives an expression for this perturbation bias,
and a geometric bound follows conditionally under a state-averaged lower-level
forgetting assumption. In computation, the signed-Poisson calculation is used as
a heuristic empirical error-budget diagnostic. In the elasticity experiments
this finite-subsampling effect is visible at the selected rates: it is not
below the paired level-one Monte Carlo error, but it remains below the
discretization proxy used in the reported multilevel error budget.

The resulting subsampling-rate criterion balances the variance,
autocorrelation, and cost contributions across levels, with the aim that the
level corrections reach their target sampling errors on comparable time scales.
Because the required sample variances \(s_{\ell}^2\) and integrated
autocorrelation times \(T_{\ell}\) depend on the chosen rate, we implemented the
criterion adaptively during the simulation. The adaptive phase can also be
interpreted as a pilot stage after which the stabilized rates are fixed. The
reported elasticity experiments demonstrate rapid stabilization near
\(\tau\approx9\)--10 and substantial computational savings in a highly
autocorrelated benchmark. The Darcy-flow experiment gives supporting evidence
of qualitatively similar finite-subsampling behaviour in another elliptic
inverse problem, while the scope of the evidence remains numerical rather than
model-independent. Overall, the results support replacing the IAT-based
decorrelation rule by a cost--error tuning criterion, accompanied by explicit
empirical perturbation diagnostics when reduced subsampling rates are used.

\bibliographystyle{siamplain}
\bibliography{references}
\end{document}

%% file: ex_shared.tex
\usepackage{lipsum}
\usepackage{amsfonts}
\usepackage{graphicx}
\usepackage{epstopdf}
\usepackage{ifthen}
\usepackage{algorithm}      
\usepackage{algpseudocode}
\algrenewcommand\alglinenumber[1]{}  

\ifpdf
  \DeclareGraphicsExtensions{.eps,.pdf,.png,.jpg}
\else
  \DeclareGraphicsExtensions{.eps}
\fi

\usepackage{tikz}
\usetikzlibrary{arrows.meta,positioning,calc}

\newsiamremark{remark}{Remark}
\newsiamremark{hypothesis}{Hypothesis}
\crefname{hypothesis}{Hypothesis}{Hypotheses}
\newsiamthm{claim}{Claim}

\headers{Efficient subsampling-rate selection for practical MLMCMC}{L. Guilliams, P. Vanmechelen, and G. Samaey}

\title{Efficient subsampling-rate selection for practical multilevel Markov chain Monte Carlo\thanks{\mbox{}
\funding{P. Vanmechelen is an SB Ph.D. fellow of the Research Foundation Flanders (FWO) under grant 1SD1825N. Additional funding was provided by the KU Leuven Research Fund under grant C14/23/098.}}}

\author{Lise Guilliams\thanks{NUMA, Department of Computer Science KU Leuven, Celestijnenlaan 200A, B-3001 Leuven}
\and Pieter Vanmechelen \footnotemark[2]
\and Giovanni Samaey \footnotemark[2]}

\usepackage{amsopn}

\makeatletter
\newcommand*{\addFileDependency}[1]{
  \typeout{(#1)}
  \@addtofilelist{#1}
  \IfFileExists{#1}{}{\typeout{No file #1.}}
}
\makeatother


%% file: ex_article.bbl
\begin{thebibliography}{10}

\bibitem{alquier_noisy_2016}
{\sc P.~Alquier, N.~Friel, R.~Everitt, and A.~Boland}, {\em Noisy {Monte}
  {Carlo}: Convergence of {Markov} chains with approximate transition kernels},
  Statistics and Computing, 26 (2016), pp.~29--47,
  \url{https://doi.org/10.1007/s11222-014-9521-x}.

\bibitem{andrieu_thoms_2008}
{\sc C.~Andrieu and J.~Thoms}, {\em A tutorial on adaptive {MCMC}}, Statistics
  and Computing, 18 (2008), pp.~343--373,
  \url{https://doi.org/10.1007/s11222-008-9110-y}.

\bibitem{baecher_reliability_2003}
{\sc G.~Baecher and J.~Christian}, {\em Reliability and {Statistics} in
  {Geotechnical} {Engineering}}, Wiley, 2003.

\bibitem{Bia_as_2023}
{\sc P.~Białas, P.~Korcyl, and T.~Stebel}, {\em Analysis of autocorrelation
  times in neural markov chain monte carlo simulations}, Physical Review E, 107
  (2023), \url{https://doi.org/10.1103/physreve.107.015303}.

\bibitem{charrier_strong_2012}
{\sc J.~Charrier}, {\em Strong and {Weak} {Error} {Estimates} for {Elliptic}
  {Partial} {Differential} {Equations} with {Random} {Coefficients}}, SIAM
  Journal on Numerical Analysis, 50 (2012), pp.~216--246,
  \url{https://doi.org/10.1137/100800531}.

\bibitem{christen_markov_2005}
{\sc J.~A. Christen and C.~Fox}, {\em Markov chain {Monte} {Carlo} {Using} an
  {Approximation}}, Journal of Computational and Graphical Statistics, 14
  (2005), pp.~795--810, \url{https://doi.org/10.1198/106186005X76983}.

\bibitem{chung2020introduction}
{\sc M.~K. Chung}, {\em Introduction to random fields}, arXiv preprint
  arXiv:2007.09660,  (2020).

\bibitem{cliffe_multilevel_2011}
{\sc K.~A. Cliffe, M.~B. Giles, R.~Scheichl, and A.~L. Teckentrup}, {\em
  Multilevel {Monte} {Carlo} methods and applications to elliptic {PDEs} with
  random coefficients}, Computing and Visualization in Science, 14 (2011),
  pp.~3--15, \url{https://doi.org/10.1007/s00791-011-0160-x}.

\bibitem{cotter_mcmc_2013}
{\sc S.~Cotter, G.~Roberts, A.~M. Stuart, and D.~White}, {\em Mcmc methods for
  functions: {Modifying} old algorithms to make them faster}, Statistical
  Science, 28 (2013), pp.~424--446, \url{https://doi.org/10.1214/13-STS421}.

\bibitem{cui_multilevel_2024}
{\sc T.~Cui, G.~Detommaso, and R.~Scheichl}, {\em Multilevel
  dimension-independent likelihood-informed {MCMC} for large-scale inverse
  problems}, Inverse Problems, 40 (2024), p.~035005,
  \url{https://doi.org/10.1088/1361-6420/ad1e2c}.

\bibitem{dodwell_hierarchical_2015}
{\sc T.~J. Dodwell, C.~Ketelsen, R.~Scheichl, and A.~L. Teckentrup}, {\em A
  {Hierarchical} {Multilevel} {Markov} {Chain} {Monte} {Carlo} {Algorithm} with
  {Applications} to {Uncertainty} {Quantification} in {Subsurface} {Flow}},
  SIAM/ASA Journal on Uncertainty Quantification, 3 (2015), pp.~1075--1108,
  \url{https://doi.org/10.1137/130915005}.

\bibitem{ghanem2003stochastic}
{\sc R.~G. Ghanem and P.~D. Spanos}, {\em Stochastic Finite Elements: A
  Spectral Approach}, Dover Publications, 2003.

\bibitem{giles_multilevel_2008}
{\sc M.~B. Giles}, {\em Multilevel {Monte} {Carlo} {Path} {Simulation}},
  Operations Research, 56 (2008), pp.~607--617,
  \url{https://doi.org/10.1287/opre.1070.0496}.

\bibitem{glynn_meyn_1996}
{\sc P.~W. Glynn and S.~P. Meyn}, {\em A {Liapounov} bound for solutions of the
  {Poisson} equation}, The Annals of Probability, 24 (1996), pp.~916--931,
  \url{https://doi.org/10.1214/aop/1039639370}.

\bibitem{haario_2001}
{\sc H.~Haario, E.~Saksman, and J.~Tamminen}, {\em An adaptive metropolis
  algorithm}, Bernoulli, 7 (2001), pp.~223--242,
  \url{https://doi.org/10.2307/3318737}.

\bibitem{hastings1970}
{\sc W.~Hastings}, {\em Monte carlo sampling methods using markov chains and
  their application}, Biometrika, 57 (1970), pp.~97--109,
  \url{https://doi.org/10.1093/biomet/57.1.97}.

\bibitem{iqbal2015fundamentals}
{\sc G.~M. Iqbal and A.~Satter}, {\em Fundamentals of Fluid Flow through Porous
  Media}, Reservoir Engineering, Elsevier Science \& Technology, United States,
  2015.

\bibitem{law_data_2015}
{\sc K.~Law, A.~Stuart, and K.~Zygalakis}, {\em Data {Assimilation}: {A}
  {Mathematical} {Introduction}}, vol.~62 of Texts in {Applied} {Mathematics},
  Springer International Publishing, Cham, 2015,
  \url{https://doi.org/10.1007/978-3-319-20325-6}.

\bibitem{levin_peres_wilmer_2017}
{\sc D.~A. Levin, Y.~Peres, and E.~L. Wilmer}, {\em Markov Chains and Mixing
  Times}, American Mathematical Society, 2~ed., 2017,
  \url{https://pages.uoregon.edu/dlevin/MARKOV/mcmt2e.pdf}.

\bibitem{link_thinning_2012}
{\sc W.~A. Link and M.~J. Eaton}, {\em On thinning of chains in {MCMC}},
  Methods in Ecology and Evolution, 3 (2012), pp.~112--115,
  \url{https://doi.org/10.1111/j.2041-210X.2011.00131.x}.

\bibitem{lykkegaard_multilevel_2023}
{\sc M.~B. Lykkegaard, T.~J. Dodwell, C.~Fox, G.~Mingas, and R.~Scheichl}, {\em
  Multilevel {Delayed} {Acceptance} {MCMC}}, SIAM/ASA Journal on Uncertainty
  Quantification, 11 (2023), pp.~1--30,
  \url{https://doi.org/10.1137/22M1476770}.

\bibitem{madrigalcianci2021analysisclassmultilevelmarkov}
{\sc J.~P. Madrigal-Cianci, F.~Nobile, and R.~Tempone}, {\em Analysis of a
  class of multilevel markov chain monte carlo algorithms based on independent
  metropolis--hastings}, SIAM/ASA Journal on Uncertainty Quantification, 11
  (2023), pp.~91--138, \url{https://doi.org/10.1137/21M1420927}.

\bibitem{metropolis1953}
{\sc N.~Metropolis, A.~Rosenbluth, M.~Rosenbluth, A.~Teller, and E.~Teller},
  {\em Equations of state calculations by fast computing machines}, Journal of
  Chemical Physics, 21 (1953), pp.~1087--1092,
  \url{https://doi.org/10.1063/1.1699114}.

\bibitem{mijatovic_vogrinc_2018}
{\sc A.~Mijatovi{\'c} and J.~Vogrinc}, {\em On the {Poisson} equation for
  {Metropolis--Hastings} chains}, Bernoulli, 24 (2018), pp.~2401--2428,
  \url{https://doi.org/10.3150/17-BEJ932}.

\bibitem{mitrophanov_2005}
{\sc A.~Y. Mitrophanov}, {\em Sensitivity and convergence of uniformly ergodic
  {Markov} chains}, Journal of Applied Probability, 42 (2005), pp.~1003--1014,
  \url{https://doi.org/10.1239/jap/1134587812}.

\bibitem{peherstorfer_optimal_2016}
{\sc B.~Peherstorfer, K.~Willcox, and M.~Gunzburger}, {\em Optimal {Model}
  {Management} for {Multifidelity} {Monte} {Carlo} {Estimation}}, SIAM Journal
  on Scientific Computing, 38 (2016), pp.~A3163--A3194,
  \url{https://doi.org/10.1137/15M1046472}.

\bibitem{peherstorfer_survey_2018}
{\sc B.~Peherstorfer, K.~Willcox, and M.~Gunzburger}, {\em Survey of
  {Multifidelity} {Methods} in {Uncertainty} {Propagation}, {Inference}, and
  {Optimization}}, SIAM Review, 60 (2018), pp.~550--591,
  \url{https://doi.org/10.1137/16M1082469}.

\bibitem{cotter_probabilistic_2015}
{\sc S.~Reich and C.~Cotter}, {\em Probabilistic Forecasting and Bayesian Data
  Assimilation}, Cambridge University Press, 2015,
  \url{https://doi.org/10.1017/CBO9781107706804}.

\bibitem{robbe_recycling_2018}
{\sc P.~Robbe, D.~Nuyens, and S.~Vandewalle}, {\em Recycling {Samples} in the
  {Multigrid} {Multilevel} ({Quasi}-){Monte} {Carlo} {Method}}, June 2018,
  \url{https://doi.org/10.48550/arXiv.1806.05619}.
\newblock arXiv:1806.05619 [math].

\bibitem{roberts_rosenthal_2007}
{\sc G.~O. Roberts and J.~S. Rosenthal}, {\em Coupling and ergodicity of
  adaptive markov chain monte carlo algorithms}, Journal of Applied
  Probability, 44 (2007), pp.~458--475,
  \url{https://doi.org/10.1239/jap/1183667414}.

\bibitem{seiffert2024estimatingeffectivesamplesize}
{\sc L.~Seiffert and F.~Pereira}, {\em Estimating the effective sample size for
  an inverse problem in subsurface flows}, arXiv preprint arXiv:2408.13411,
  (2024).

\bibitem{smith_uncertainty_2024}
{\sc R.~Smith}, {\em Uncertainty {Quantification}: theory, implementation and
  applications}, Society for Industrial and Applied Mathematics, Philadelphia,
  2024.

\bibitem{tarantola_inverse_2005}
{\sc A.~Tarantola}, {\em Inverse {Problem} {Theory} and {Methods} for {Model}
  {Parameter} {Estimation}}, Society for Industrial and Applied Mathematics,
  Jan. 2005, \url{https://doi.org/10.1137/1.9780898717921}.

\bibitem{tierney_1994}
{\sc L.~Tierney}, {\em Markov chains for exploring posterior distributions},
  The Annals of Statistics, 22 (1994), pp.~1701--1728,
  \url{https://doi.org/10.1214/aos/1176325750}.

\bibitem{vanmechelen_multilevel_2025}
{\sc P.~Vanmechelen, G.~Lombaert, and G.~Samaey}, {\em Multilevel {Markov}
  {Chain} {Monte} {Carlo} with {Likelihood} {Scaling} for {Bayesian}
  {Inversion} with {High}-resolution {Observations}}, SIAM/ASA Journal on
  Uncertainty Quantification, 13 (2025), pp.~375--399,
  \url{https://doi.org/10.1137/24M1634333}.

\bibitem{wicehardware}
{\sc {Vlaams Supercomputing Center}}, {\em {wICE} hardware}, 2025.
\newblock
  \nolinkurl{https://docs.vscentrum.be/leuven/tier2_hardware/wice_hardware.html}
  [Accessed: September 22, 2026].

\bibitem{zheng2017simulation}
{\sc Z.~Zheng and H.~Dai}, {\em Simulation of multi-dimensional random fields
  by karhunen--lo{\`e}ve expansion}, Computer Methods in Applied Mechanics and
  Engineering, 324 (2017), pp.~221--247,
  \url{https://doi.org/10.1016/j.cma.2017.05.018}.

\end{thebibliography}
